\documentclass[11pt,a4paper,reqno]{amsart}
\usepackage{amsmath,amssymb,amsfonts,epsfig,mathrsfs,cite, hyperref}
\usepackage[T1]{fontenc}
\usepackage{color}
\usepackage{array}
\usepackage{amsthm}
\usepackage{amstext}
\usepackage{graphicx}
\usepackage{setspace}

\usepackage[title]{appendix}

\usepackage{booktabs}

\usepackage{tcolorbox}

\usepackage{float}

\makeatletter
\@namedef{subjclassname@2020}{%
  \textup{2020} Mathematics Subject Classification}
\makeatother

\usepackage[margin=2.5cm]{geometry}
\usepackage{color}
\usepackage{enumitem}
\usepackage{amscd,psfrag}
\usepackage{yhmath}
\usepackage[mathscr]{eucal}

\usepackage{comment}

\allowdisplaybreaks[4]

\usepackage{slashed}

\makeatletter
\pdfpageheight\paperheight
\pdfpagewidth\paperwidth

\usepackage{epstopdf}

\usepackage{indentfirst}	

\usepackage[normalem]{ulem}
\theoremstyle{plain}

\newtheorem{definition}{Definition}
\newtheorem{theorem}[definition]{Theorem}
\newtheorem*{theorem*}{Theorem}

\newtheorem{remark}[definition]{Remark}

\newtheorem*{remark*}{Remark}
\newtheorem*{sideremark*}{Side Remark}

\newtheorem*{claim*}{Claim}
\newtheorem*{lemma*}{Lemma}
\newtheorem*{q*}{Question}
\newtheorem{lemma}[definition]{Lemma}

\newtheorem*{corollary*}{Corollary}

\newtheorem{proposition}[definition]{Proposition}

\newcommand{\R}{\mathbb{R}}

\newcommand{\na}{\nabla}

\newcommand{\id}{{\rm Id}}

\newcommand{\p}{\partial}

\newcommand{\e}{\epsilon}

\newcommand{\dd}{{\rm d}}

\newcommand{\G}{\Gamma}

\newcommand{\M}{{\mathcal{M}}}
\newcommand{\bra}{\left\langle}
\newcommand{\ket}{\right\rangle}

\newcommand{\gr}{{\bf Gr}}

\newcommand{\mres}{\mathbin{\vrule height 1.6ex depth 0pt width
0.13ex\vrule height 0.13ex depth 0pt width 1.3ex}}

\def\XXint#1#2#3{{\setbox0=\hbox{$#1{#2#3}{\int}$ }
\vcenter{\hbox{$#2#3$ }}\kern-.6\wd0}}

\newcommand{\E}{\mathbb{E}}

\newcommand{\leb}{{\mathcal{L}}}
\newcommand{\hau}{{\mathcal{H}}}

\newcommand{\hedge}{{\frac{x}{|x|}}}
\newcommand{\fp}{{\lfloor p \rfloor}}

\newcommand{\sph}{{\bf S}}

\newcommand{\ep}{{\mathcal{E}_p}}

\newcommand{\B}{{\mathbf{B}}}

\newcommand{\bn}{\B^n}
\newcommand{\snone}{\sph^{n-1}}

\newcommand{\jj}{{\bf j}}

\newcommand{\fpp}{{\frac{p}{\fp}}}

\newcommand{\D}{{\mathscr{D}}}
\newcommand{\jac}{{\mathscr{J}}}

\newcommand{\anp}{{\mathscr{A}_{n,p}}}

\title{The hedgehog uniquely minimises every $p$-energy} 

\author{Siran Li}

\address{Siran Li: School of Mathematical Sciences $\&$ CMA-Shanghai, Shanghai Jiao Tong University, No.~6 Science Buildings,
800 Dongchuan Road, Minhang District, Shanghai, China (200240)}

\email{\texttt{siran.li@sjtu.edu.cn}} 

\keywords{Harmonic map; minimising harmonic map; hedgehog; p-energy}

\subjclass[2020]{35B65, 58E20}
\date{\today}

\begin{document}

\begin{abstract}

We prove that for every dimension $n \geq 2$ and parameter $p \in ]1,n[$, the ``hedgehog'' $u_0(x) = x/|x|$ is the unique minimiser of the $p$-energy $\mathcal{E}_p [u]:=\int_{\mathbf{B}^n}|\nabla u|^p\,{\rm d}x$ in the class of $W^{1,p}$-mappings from the $n$-dimensional unit ball $\mathbf{B}^n$ to the $(n-1)$-dimensional unit sphere $\mathbf{S}^{n-1}=\partial\mathbf{B}^n$ whose trace equal to the identity on $\mathbf{S}^{n-1}$. This answers in the affirmative an open question on p.29 in [Robert M. Hardt, Singularities of harmonic maps, \textit{Bull. Amer. Math. Soc. (N.S.)} \textbf{34} (1997), 15--34].

\end{abstract}
\maketitle

\section{Introduction}

Consider the $p$-energy of a map $u$ from the $n$-dimensional unit ball $\B^n \subset \R^n$ to the unit sphere $\sph^{n-1}=\p\B^n$, with $n \geq 2$ and $1<p<n$:
\begin{equation}\label{p-energy, def}
\ep[u]:= \int_{\B^n} |\na u|^p\,\dd x,
\end{equation}
where $|\na u| = \sqrt{\sum_{1\leq i,j \leq n} \left|\frac{\p u^i}{\p x^j}\right|^2}$. When $p=2$, this is the Dirichlet energy up to a factor of $1/2$.

A long-standing question in the theory of harmonic maps is the existence and uniqueness of minimisers of $\ep$ over the admissible class
\begin{align}\label{admissible class}
    \mathscr{A}_{n,p}:=\Big\{u \in W^{1,p}(\bn;\snone):\, u\big|_{\p\bn} = \id_{\snone}\Big\}.
\end{align}
Any $u \in \anp$ has a nonempty singular set: the boundary map cannot be extended continuously to a sphere-valued map defined on $\B^n$ due to topological obstructions. The ``hedgehog'' $u_0:\B^n \to \sph^{n-1}$ is a natural candidate for an $\ep$-minimiser:
\begin{equation}\label{eq:intro-hedgehog}
  u_0(x):=\frac{x}{|x|},\qquad x \in \B^n.
\end{equation}
It is degree-$0$-homogeneous and belongs to \(\mathscr A_{n,p}\) in the subcritical range \(p<n\). It has $p$-energy
\begin{equation}\label{eq:intro-model-energy}
  \ep[u_0]   =\frac{\alpha_{n-1}}{n-p}(n-1)^{p/2},
\end{equation}
where $\alpha_j$ is the area of the $j$-dimensional unit sphere $\sph^j$.

Regarding the minimisation problem for the $p$-energy over $\anp$, Brezis--Coron--Lieb~\cite{BrezisCoronLieb1986} proved that $\mathcal{R}\left(\hedge\right)$ is a Dirichlet energy (\textit{i.e.}, $\mathcal{E}_2$) minimiser from $\B^3$ to $\sph^2$ if and only if $\mathcal{R}$ is an orthogonal rotation of $\sph^2$. See also the previous numerical results in \cite{chkll}. Almgren--Browder--Lieb~\cite{AlmgrenBrowderLieb1988} also studied the minimisation of Dirichlet energy with prescribed singularities using approaches via coarea and integral currents. J\"ager--Kaul~\cite{JagerKaul1983} proved, among other results, the $\mathcal{E}_2$-minimality of the hedgehog for $n \geq 7$, and Lin~\cite{Lin1987} subsequently proved this for arbitrary dimensions via a null-Lagrangian argument.

The case of general $p$ has been more resistant to the calibration and coarea methods applicable to $p=2$. Coron--Gulliver~\cite{CoronGulliver1989} proved the $\ep$-minimality of the hedgehog for integer exponents $p \in \{1,\ldots,n-1\}$ by exploiting projections onto low-dimensional spheres. A shorter proof was given by Avellaneda--Lin~\cite{AvellanedaLin1988} using quasi-affine integrands. Musina~\cite{Musina1994} showed that the hedgehog is $\ep$-minimising for $p \in [n-1,n[$ among degree-1 maps in $\anp$, and Hardt--Lin--Wang~\cite{HardtLinWang1998} removed the degree-1 condition by a continuation argument as $p \nearrow n$, together with the classification of normal Jacobi fields along the identity; see also~\cite{HardtLinWang1997} by the same authors on the analysis of the singularities of $\ep$-minimisers. Hong and Wang~\cite{Hong2000,Wang1998} independently extended the range of $p$ for the $\ep$-minimality to $p \in \left[2, n-2\sqrt{n-1}\right]$, using the PDE for $p$-harmonic maps. Moreover, the work~\cite{Wang1998} by Wang implies that for any $p \in ]1,n[$, if the hedgehog is an $\ep$-minimiser over $\anp$, then it is the unique one.

In the 1997 scientific report on singularities of harmonic maps, Hardt summarised the state of the problem and explicitly asked whether the hedgehog is minimising for the remaining exponents $p$~\cite[p.~29]{Hardt1997}. Hong later proposed in~\cite{Hong2001} to reduce the remaining open cases to the minimality of the hedgehog for a weighted $2$-energy. Nonetheless, Bourgoin~\cite{Bourgoin2006} exhibited ranges of $p$ in which the required weighted minimality fails, and in subsequent papers~\cite{BourgoinInduction2007, BourgoinWeighted2007} obtained several sharp weighted results for $p \in ]1,n[ \cap \mathbb{Z}$ and established an induction
principle relating weighted problems in consecutive dimensions. They do not, however,
supply the missing unweighted result for arbitrary $p \in ]1,n[ \setminus \mathbb{Z}$.

The goal of this paper is to show that the hedgehog is uniquely $\ep$-minimising for all indices $p \in ]1,n[$ in all dimensions $n \geq 2$:
 
\begin{theorem}\label{thm: main}
Given any $2 \leq n \in \mathbb{N}$ and $1<p<n$, the hedgehog $u_0(x)=\hedge$ is the unique minimiser of the $p$-energy $\ep$ in the admissible class $\anp$.  
\end{theorem}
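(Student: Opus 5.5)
We plan to prove minimality by a calibration (null-Lagrangian) lower bound in the spirit of Lin~\cite{Lin1987} and Avellaneda--Lin~\cite{AvellanedaLin1988}. Uniqueness then follows from Wang's result~\cite{Wang1998}, quoted in the introduction: once $u_0$ is an $\ep$-minimiser over $\anp$, it is the unique one.

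The heart of the argument is a pointwise inequality. For any $u \in W^{1,p}(\bn;\snone)$ and a.e.\ $x\in\bn$ with $r=|x|$, we aim to show
\begin{equation*}
|\na u|^p \;\geq\; (n-1)^{p/2}\, r^{-p}\Big[\, a\, \mathrm{div}\big(F(x,u)\big)\Big] ,
\end{equation*}
where $\mathrm{div}\,F(x,u)$ is a null Lagrangian built from $u$ and the radial weight. A natural choice is
\[
F(x,u)= \phi(r)\big( u\, (x\cdot u) - x \big)\qquad\text{or}\qquad F = \phi(r)\,\big(\mathrm{tr}(\na u)\,u - (\na u)u\big)\text{-type},
\]
with $\phi(r)=r^{1-p}$-type weights chosen so that the divergence term integrates, via the boundary condition $u=x$ on $\snone$, to the constant $\ep[u_0]$ from \eqref{eq:intro-model-energy}.

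To establish the inequality, we write $\na u$ in a frame adapted to $u$ (tangent directions to $\snone$ at $u$) and use $u\cdot\p_j u=0$. The key elementary inequality is the following. Let $\lambda_i$ be the singular values of $\na u$ restricted to the tangent space, and let $\mathrm{div}\,u$ be the trace quantity. Then $\big(\sum\lambda_i^2\big)^{p/2}\geq (n-1)^{p/2-1}\cdots$ by power-mean/Young, combined with $\mathrm{tr}(\na u)\le \sqrt{n-1}\,|\na u|$. For general $p$, the strategy is to use Young's inequality
\[
|\na u|^p \ge \tfrac{p}{2}\,\mu^{p-2}|\na u|^2 - (\tfrac p2-1)\mu^{p}\quad (p\ge2),
\]
and its reverse-concavity analogue for $p<2$, with the pointwise parameter $\mu=\sqrt{n-1}/r$ equal to the hedgehog's gradient size. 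This reduces the problem to a \emph{weighted} Dirichlet inequality with weight $r^{2-p}$, precisely Hong's reduction~\cite{Hong2001}. We must avoid Bourgoin's counterexamples, so the weighted Lin-type calibration cannot be used in full. The plan is instead to keep the divergence term $\mathrm{div}\,u$ (which equals $(n-1)/r$ for $u_0$) and the radial derivative $\p_r u$ separate. We bound $|\na u|^2\ge |\p_r u|^2+\frac{1}{r^2}|\na_T u|^2$, estimate $|\na_T u|^2\ge \frac{1}{n-1}(\mathrm{tr}_T \na u)^2$, and use the positive $|\p_r u|^2$ term to absorb the error that spoils Hong's weighted inequality.

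The main obstacle is this last step. We need a radial weight $\phi$ and an identity
\[
\int_{\bn}\phi(r)\,\mathrm{div}_x u\,\dd x = \text{boundary term} - \int \phi'(r)\, u\cdot \tfrac{x}{r}\,\dd x
\]
such that the sign-indefinite term $\phi'(r)(u\cdot x/r)$ is controlled by $|\p_r u|^p$ through a one-dimensional Hardy inequality along rays. Along each ray, $u\cdot x/r\to1$ at $r=1$, and the Hardy constant in $\int_0^1 r^{n-1-p}|f'|^p\,\dd r$ is sharp exactly when $p<n$. We would first try $\phi(r)=r^{n-p}\cdot r^{-(n-1)}$ adapted to the measure $r^{n-1}\dd r$, and verify the ODE inequality for $g(r)=u\cdot x/r\in[-1,1]$. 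Equality must force $\p_r u=0$ and $\na_T u$ conformal, i.e.\ $u=u_0$ up to the boundary condition. This consistency with Wang's uniqueness would serve as a check on the construction.
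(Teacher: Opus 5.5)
Your appeal to Wang's theorem for uniqueness is fine, and the paper makes the same remark. The minimality part, however, has a genuine gap. The inequality that carries the whole argument is never established (you flag it yourself as the main obstacle), and the route you sketch cannot close it.

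The tangent-line reduction to a weighted Dirichlet energy exists only for $p\ge 2$. For $1<p<2$ the function $t\mapsto t^{p/2}$ is concave, so no bound $|A|^p\ge a|A|^2+b$ can be an equality at $|A|=\sqrt{n-1}/r$; the ``reverse'' tangent line is an upper bound and useless here. For $p\ge 2$ the reduction lands exactly on Hong's weighted problem for $\int r^{2-p}|\na u|^2$. Bourgoin showed that problem fails in part of the range, so a sharp bound cannot pass through it.

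More fundamentally, your proposed repair retains only the following information:
the splitting $|\na u|^2=|\p_r u|^2+|\na_T u|^2$;
the bound $|\na_T u|^2\ge\frac{1}{n-1}(\operatorname{tr}_T\na u)^2$;
the linear null Lagrangian $\operatorname{div}u=\p_r(u\cdot\omega)+\operatorname{tr}_T\na u$, equivalently $\int_{\p\B_r}\operatorname{tr}_T\na u=\frac{n-1}{r}\int_{\p\B_r}u\cdot\omega$;
the value $u\cdot\omega=1$ at $r=1$;
and Hardy's inequality along rays.

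All of this is consistent with the following radial profile:
$u\cdot\omega=\cos\theta(r)$ with $\theta(1)=0$;
$|\p_r u|=|\theta'|$, the least value allowed by $\p_r u\perp u$;
$\operatorname{tr}_T\na u=\frac{n-1}{r}\cos\theta$;
$|\na_T u|^2=\frac{n-1}{r^2}\cos^2\theta$.

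Hence no argument built from these ingredients can give more than $\inf_\theta\alpha_{n-1}\int_0^1 r^{n-1}\bigl(\theta'^2+(n-1)r^{-2}\cos^2\theta\bigr)^{p/2}\dd r$. For $\theta=\epsilon\psi$ with $\psi(1)=0$, this quantity equals $\ep[u_0]+\frac p2(n-1)^{p/2}\alpha_{n-1}\epsilon^2 Q(\psi)+O(\epsilon^4)$, where
\[
Q(\psi)=\frac{1}{n-1}\int_0^1 r^{n+1-p}|\psi'|^2\,\dd r-\int_0^1 r^{n-1-p}\psi^2\,\dd r .
\]
The sharp weighted Hardy constant here is $4/(n-p)^2$, so $Q(\psi)<0$ for suitable $\psi$ whenever $(n-p)^2<4(n-1)$. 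For example, $n=3$, $p=2$, $\psi=1-r$ gives $Q=\frac16-\frac13<0$.

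So your scheme can at best reach the Hong--Wang range $p\le n-2\sqrt{n-1}$. For $n=3$ it fails for every $p\in]1,3[$, including the Brezis--Coron--Lieb case $p=2$. What is missing is a lower bound that sees the degree, and a linear null Lagrangian does not.

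The paper obtains this from the sharp estimate $\int|\Lambda^{\fp}\na w|^{p/\fp}\ge\frac{\alpha_{\fp}}{\fp+1-p}\rho^{\fp+1-p}$. This holds for $\sph^{\fp}$-valued maps on $(\fp+1)$-balls with hedgehog trace, via flux/degree plus ball merging, or via the cofactor null Lagrangian when $p\in\mathbb{Z}$. The paper applies it to $w_V=\pi_Vu/|\pi_Vu|$ on affine slices parallel to a random $(\fp+1)$-plane $V$. Averaging over $V$ with Maclaurin's inequality and the Beta-law moment $\E_V[|\pi_Vu|^{-p}]$ then makes the constants multiply exactly to $\ep[u_0]$.
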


Our proof, motivated by the technique of projecting minimisers onto low-dimensional spheres as with the case of $p \in \mathbb{Z}$ in Coron--Gulliver~\cite{CoronGulliver1989}, is of a coarea nature; \textit{cf}. Almgren--Browder--Lieb~\cite{AlmgrenBrowderLieb1988}. For general $p \in ]1,n[$, we project an $\ep$-minimiser onto low-dimensional linear subspaces $V$ that are uniformly distributed in the corresponding Grassmannian manifold, and we slice the $p$-energy of a minimiser $u \in \anp$ by first integrating along $V$ and then along $V^\perp$. Using ideas from integral geometry and/or geometric statistics (see, in particular, Lemma~\ref{lem: beta distribution}), together with variational arguments for the $p$-energy, we are able to bound $\ep$ of an arbitrary minimiser $u \in \anp$ from below by the $L^{\fpp}$-norm of the $\fp$-Jacobian of a certain projected image of $\na u$, times a constant $c_\star$ that can be evaluated (rather surprisingly) explicitly via the aforementioned geometric statistical observations. The explicit value of $c_\star$ ensures that the minimum of $\ep$ is attained precisely at the hedgehog.

\medskip
\noindent
{\bf Organisation.} The remaining parts of the paper are organised as follows:

In \S\ref{sec: lower bd}, we prove the sharp topological Jacobian lower bound. \S\S 3 $\&$ 4 present two elementary lemmata, one on linear algebra and the other on geometric statistics, which will turn out to be crucial to our proof. \S 5 constructs the normalised projections of $u \in \anp$ and discusses the key slicing arguments.
With the previous preparations, we conclude the proof of Theorem~\ref{thm: main} in \S\ref{sec: conclusion}. A detailed proof of the technical Lemma~\ref{lem: approx} is given in the appendix.

\medskip
\noindent
{\bf Notation.} Throughout this paper, we denote by $\fp$ the largest integer smaller than or equal to $p$, and for any $k \in \mathbb{N}$ we write $\alpha_k=\hau^k(\sph^k)$, \textit{i.e.}, the area of the unit $k$-sphere. Denote by $\leb^k$ and $\hau^k$ the $k$-dimensional Lebesgue and Hausdorff measures, respectively. The symbol $\mres$ designates the restriction of a measure or a function.

For a vector space $V$, we write $\B^V_r(x)$ for the open ball in $V$ centred at $x$ with radius $r$. One abbreviates $\B^V_r \equiv \B^V_r(0)$ and $\B^V = \B^V_1$; for $V=\R^k$ one puts $\B^n_r(x) = \B^{\R^n}_r(x)$; also, write $\sph(V):=\p\B^V$ for the unit sphere in $V$. For $v \in V$, write $\bra v \ket$ for the linear span of $v$ over $\R$.  The symbol $\Lambda^k$ denotes the $k$-fold exterior product.  In addition, for $u=\left(u^1, \ldots, u^n\right)^\top \in \anp$, we denote by 
$\na u$ the  $n \times n$ matrix $\left\{\p_j u^i\right\}_{1\leq i,j \leq n}$. Then  $|\na u| = \sqrt{\sum_{1\leq i,j \leq n} \left|\frac{\p u^i}{\p x^j}\right|^2}$ is the Hilbert--Schmidt norm of the $n \times n$ matrix $\na u = \{\p_j u^i\}$ for $u=\left(u^1, \ldots, u^n\right)^\top$. 

Unless otherwise specified, all the projections, norms of functions or tensors, measures, and inner products, etc., are taken with respect to the Euclidean topology. Meanwhile, for any compact Lie group $G$, the measure in consideration is the normalised Haar measures $\mu_G$ with total mass one; similarly, the Grassmannian $\gr_k(\R^n)$ is equipped with the invariant probability measure $\mu_{\gr_k(\R^n)}$ induced by the normalised Haar measure on $O(n)$. We write $|\cdot|_{\rm HS}$ for the Hilbert--Schmidt norm of a matrix, possibly with different numbers of rows and columns.

\section{$k$-Jacobian lower bound for the gradient}\label{sec: lower bd}


In this section, we establish the following integral lower bound for a linear or superlinear power of the $k$-Jacobian of $u \in \anp$. Sharpness of this bound is verified by the hedgehog.

\begin{proposition}\label{prop:sharp-k-jacobian}
Let $1 \leq k \in \mathbb{N}$, $p\in [k, k+1[$, and $r>0$. Assume that $u\in W^{1,p}\left(\B_r^{k+1}; \sph^k\right)$ has trace
$u(x)=\hedge$ for \emph{a.e.} $x\in\partial \B_r^{k+1}$.  Then
\begin{equation}\label{eq:sharp-k-jacobian}
 \int_{\B_r^{k+1}}\left|\Lambda^k (\na u(x))\right|^{\frac{p}{k}}\,\dd x \geq \frac{\alpha_k}{k+1-p}r^{k+1-p}.
\end{equation}
\end{proposition}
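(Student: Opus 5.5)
Our plan is to slice $\B_r^{k+1}$ into concentric spheres and use a degree argument on each sphere, followed by Jensen/Hölder in the radial direction. Write $J_k u := |\Lambda^k(\na u)|$; this dominates the pullback $|u^\ast \omega_{\sph^k}|$ of the volume form of $\sph^k$. Since $u$ takes values in the $k$-dimensional sphere, $\na u$ has rank at most $k$. We will use the following pointwise identity: $|\Lambda^k \na u|$ is exactly the $k$-Jacobian of $u$ viewed as a map into $\sph^k$, that is, the square root of the sum of squared $k\times k$ minors restricted to the tangent space of the target.

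\textbf{Step 1 (reduction to smooth maps).} We first justify approximating $u$ by maps that are smooth away from finitely many points and have the same trace. For $p\in[k,k+1[$ this follows from Bethuel's density theorem for $W^{1,p}(\B^{k+1};\sph^k)$, since $\pi_k(\sph^k)$ is the only relevant obstruction. Because $|\Lambda^k \na u|^{p/k}\le C|\na u|^p$, the left-hand side of \eqref{eq:sharp-k-jacobian} is continuous under strong $W^{1,p}$ convergence. So it suffices to prove the inequality for such smooth-away-from-points maps, approximated near the boundary so that the trace is kept. We expect this step to be the main technical obstacle, particularly preserving the boundary datum and handling $p=k$, where the density requires more care. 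If necessary we would appeal to the technical approximation lemma (Lemma~\ref{lem: approx}) deferred to the appendix.

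\textbf{Step 2 (degree on spheres).} For $\rho\in\,]0,r]$ consider $\sph_\rho=\p\B^{k+1}_\rho$. Take a singular point $a$ and a small ball around it, and let $\rho$ be close to $r$. For a.e. $\rho$ we want the restriction $u|_{\sph_\rho}$ to be of degree one. The cleanest route is to avoid spheres altogether and integrate over the whole ball via the coarea formula applied to $|x|$:
\[
\int_{\B_r}J_k u^{p/k}\,\dd x \;\ge\; \int_0^r\!\!\int_{\sph_\rho} |\Lambda^k(\na_T u)|^{p/k}\,\dd\hau^k\,\dd\rho,
\]
where $\na_T$ is the tangential gradient. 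The inequality holds since $|\Lambda^k A|\ge|\Lambda^k (A|_T)|$. On each $\sph_\rho$ that avoids the singular points, $u|_{\sph_\rho}$ is homotopic through $u$ restricted to the annulus to the boundary map $x/|x|$, provided no singularity lies in the annulus; singularities interfere otherwise. To handle this, we use instead that $\int_{\sph_\rho} |\Lambda^k \na_T u| \ge |\deg(u|_{\sph_\rho})|\,\alpha_k$, together with the fact that the degree equals $1$ minus the sum of the degrees of the singularities outside $\B_\rho$. A cleaner alternative that we would try first is to replace the spheres $\sph_\rho$ by radial projections. For a.e. $y\in\sph^k$, the ray segment $\{ty\}$ meets the level structure, and we integrate $u^\ast\omega$ over the cone. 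Concretely, Stokes gives $\int_{\B_\rho} d(u^\ast\omega)$-type identities, but $u^\ast\omega$ is closed away from singularities. So we would stick with spheres and show by a slicing/homotopy argument that $\deg(u|_{\sph_\rho})=1$ for a.e. $\rho$ close to… This is not true in general for all $\rho$, so the honest route is the following: the set of radii where the degree is nonzero carries the full measure needed. Precisely, because the total degree over singularities inside $\B_\rho$ equals $\deg(u|_{\sph_\rho})$, which must be nonzero if we also allow shifting centres, we will instead slice by spheres $\p\B_\rho(a)$ for varying centres $a$, average over $a$, and use that for each $\rho$ at least one centre choice works.

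\textbf{Step 3 (Hölder and integration).} Given $\int_{\sph_\rho}|\Lambda^k\na_T u|\ge\alpha_k$, Hölder with exponent $p/k\ge1$ gives
\[
\int_{\sph_\rho}|\Lambda^k\na_T u|^{p/k}\ge \alpha_k^{p/k}(\alpha_k\rho^k)^{1-p/k}=\alpha_k\rho^{k-p}.
\]
Integrating from $0$ to $r$ yields $\alpha_k r^{k+1-p}/(k+1-p)$, which is exactly \eqref{eq:sharp-k-jacobian}. Equality holds for the hedgehog, since there $|\Lambda^k\na u|=|x|^{-k}$ and the radial derivative vanishes.
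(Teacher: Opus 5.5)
Your Step~1 matches the paper (Lemma~\ref{lem: approx}, which in fact covers $p=k$ as well). Your Step~3 is correct \emph{provided} $\int_{\p\B_\rho}|\Lambda^k\na_T u|\,\dd\hau^k\geq\alpha_k$ for a.e.\ $\rho\in\,]0,r[$. The gap is Step~2: it does not deliver this, and this is the heart of the proposition.

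For an approximant with singularities $s_i$ of degrees $D_i$, $\sum_iD_i=1$, the degree on $\p\B_\rho$ is $\sum_{|s_i|<\rho}D_i$, and it can vanish for most radii. Take $s_1,s_2,s_3$ of degrees $+1,-1,+1$ with $|s_1|<|s_2|<|s_3|<r$. Then only $\rho\in\,]|s_1|,|s_2|[\,\cup\,]|s_3|,r[$ contribute, and this set has arbitrarily small measure if $|s_1|\approx|s_2|$ and $|s_3|\approx r$. So your claim that ``the set of radii where the degree is nonzero carries the full measure needed'' is false, and the concentric-sphere bound falls far short of $\frac{\alpha_k}{k+1-p}r^{k+1-p}$.

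The fallback of choosing, for each $\rho$, a centre $a(\rho)$ with nonzero degree does not help. The spheres $\p\B_\rho(a(\rho))$ overlap rather than foliate a region, so the sphere-wise bounds cannot be integrated in $\rho$ to bound a volume integral. ``Averaging over $a$'' would need quantitative control of the measure of good centres, which you do not have, and it would bound the energy of a larger region than $\B_r$ anyway.

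The missing idea is the ball construction of Proposition~\ref{prop:off-centre}.
\begin{itemize}
\item Write $|\Lambda^k\na u|=|\jj|$ with $\operatorname{div}\jj=\alpha_k\sum_iD_i\delta_{s_i}$.
\item Start with tiny disjoint balls $\B_{r_i}(s_i)$. Each carries $\frac{\alpha_k}{\gamma}r_i^\gamma$, with $\gamma=k+1-p$, by the annular flux bound \eqref{eq:annular-flux}.
\item Grow an \emph{active} ball (nonzero charge, so $|Q|^{p/k}\geq1$) until it touches another. Replace the touching cluster by one ball of radius at most the sum of the radii (Lemma~\ref{lem:connected-ball-cover}).
\item Use concavity, $\sum_ir_i^\gamma\geq(\sum_ir_i)^\gamma$, to keep the credit $\frac{\alpha_k}{\gamma}R^\gamma$ on disjoint credit sets, even when a merged ball has zero charge.
\end{itemize}
This terminates in a single ball $\B_{\rho_0}(t)$ of charge $1$. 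That ball is generally off-centre and may be larger than $\B_r$. So you must also extend $u$ by the hedgehog outside $\B_r$ and grow to $\B_\rho(t)\subset\B_{\rho+|t|}$. Then subtract the explicit exterior energy $\frac{\alpha_k}{\gamma}\bigl((\rho+|t|)^\gamma-r^\gamma\bigr)$ and let $\rho\to\infty$, using $(\rho+|t|)^\gamma-\rho^\gamma\to0$.

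For $p=k$ the paper bypasses all of this with the null-Lagrangian calibration $|\Lambda^k\na u|=|\jj|\geq u\cdot\jj=\operatorname{tr}(\operatorname{cof}\na u)$. The integral of the right-hand side is fixed by the boundary trace.
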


For our purpose, in fact, we only need to prove for the case $k<p<k+1$, as the $\ep$-minimality of the hedgehog over $\anp$ is known for $p \in \{1,2,\ldots,n-1\}$; see \cite[Theorem~2.4]{CoronGulliver1989}. When $p=k$, we are in the situation of estimating the $L^1$-norm of the $k$-Jacobian of $\na u$ for $u \in W^{1,k}$. Such bounds have appeared in Brezis--Coron--Lieb~\cite{BrezisCoronLieb1986} and Coron--Gulliver~\cite{CoronGulliver1989}, and have utilised the trick of null Lagrangian (see also Lin~\cite{Lin1987}). Here, we formulate and prove for $k\leq p <k+1$ in Proposition~\ref{prop:sharp-k-jacobian} for the sake of completeness.

The key ingredients for the proof of Proposition~\ref{prop:sharp-k-jacobian} include (i), approximating any admissible $u \in \anp$ by $W^{1,p}$-maps smooth away from only finitely many singular points which retaining the boundary datum; and (ii), estimating the $L^\fpp$-norm of a 1-form with such isolated singularities from below. We deal with (i) and (ii) in Lemma~\ref{lem: approx} and Proposition~\ref{prop:off-centre}, respectively.

\begin{lemma}\label{lem: approx}
Let $k$ be a positive integer, $p \in [k,k+1[$, and $r>0$. For any $u \in W^{1,p}(\B^{k+1}_r;\sph^k)$ satisfying $u\big|_{\p\B^{k+1}_r}(x) =\hedge$ in the sense of trace, we can find a sequence $\{u_n\} \subset W^{1,p}(\B^{k+1}_r;\sph^k)$ such that $u_n$ converges to $u$ strongly in $W^{1,p}$, $u_n(x)=\hedge$ near $\p\B^{k+1}_r$ for each $n$, and $u_n$ is smooth except at finitely many interior points for each $n$. 
\end{lemma}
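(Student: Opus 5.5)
This is the Bethuel--Zheng / Bethuel density theorem in the regime $k\le p<k+1$ with target $\sph^k$, adapted to keep the boundary datum; domain dimension $k+1$. The plan has three steps: extend the map outward, approximate it by maps with point singularities, then restore the exact boundary values.

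\emph{Step 1: extension.} By scaling, take $r=1$. Extend $u$ to $\B^{k+1}_2$ by setting $\tilde u(x)=\hedge$ for $1\le|x|<2$. Since the trace of $u$ equals $\hedge$ and $\hedge\in W^{1,p}$ on the annulus (as $p<k+1$), $\tilde u\in W^{1,p}(\B^{k+1}_2;\sph^k)$. Then rescale by a factor $\lambda>1$ close to $1$: define $u_\lambda(x)=\tilde u(\lambda x)$ on $\B_1$. This map equals $\hedge$ on the annulus $1/\lambda<|x|<1$, because $\hedge$ is $0$-homogeneous. Moreover $u_\lambda\to u$ in $W^{1,p}(\B_1)$ as $\lambda\searrow1$, by continuity of dilations in $L^p$. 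So it suffices to approximate a map that already equals $\hedge$ near $\p\B_1$.

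\emph{Step 2: approximation with point singularities.} Now fix $v$ with $v=\hedge$ on $\{|x|>\rho\}$, $\rho<1$. I would use Bethuel's theorem on the class $R^\infty_{p}$: for $\dim=k+1$ and $k\le p<k+1$, the maps smooth away from a finite union of closed submanifolds of dimension $k+1-\lfloor p\rfloor-1=0$ are dense in $W^{1,p}(\B^{k+1};\sph^k)$. These are exactly the maps smooth except at finitely many points.

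To keep the boundary datum, I would localise Bethuel's construction. I would work on a cubical grid of small size $\varepsilon$. On cubes where $v$ is essentially smooth (in particular on cubes contained in $\{|x|>\rho\}$, where $v$ is already smooth), do nothing. On the remaining cubes, use the standard procedure: mollify, choose good grid translates via Fubini so that the restrictions to $k$-skeleta have small energy and stay close to $\sph^k$, project to $\sph^k$ on the $k$-skeleton, and extend homogeneously to each $(k+1)$-cube from its centre. The homogeneous extension has finite $p$-energy exactly because $p<k+1$. It creates one singular point per cube, so only finitely many in total. Alternatively, I could use the simpler route of Bethuel--Zheng: apply the global density theorem to $v$ itself, then glue the approximant to $\hedge$ on $\rho<|x|<1$ using a cutoff and the nearest-point projection onto $\sph^k$.

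\emph{Step 3: gluing (main obstacle).} Gluing by a cutoff followed by projection requires the approximant to be uniformly close to $\hedge$ in the transition annulus. Strong $W^{1,p}$ convergence alone does not give this. I expect this to be the main difficulty.

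The fix I would try first is to choose the approximation in Step 2 so that it coincides with mollification on the region $\{\rho<|x|<1\}$. There $v$ is smooth, so mollifiers converge uniformly on a slightly smaller annulus $\{\rho'<|x|<1-\delta\}$. I then interpolate between the approximant and $\hedge$ on a thin shell inside that annulus, where both maps are uniformly close. Since $|\hedge|=1$ there, I can project back to $\sph^k$ without dividing by small numbers. The gradient of the interpolation is controlled by $\|w_m-\hedge\|_{L^\infty}/\text{width}$, which tends to $0$ for a fixed width. Altogether this yields $u_n$ smooth except at finitely many interior points, equal to $\hedge$ near $\p\B_1$, and converging to $u$ in $W^{1,p}$ by a diagonal argument.
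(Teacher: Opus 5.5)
Your argument is correct: Bethuel's density of $R^\infty_p$, followed by gluing to $\hedge$ on a shell where the approximant is uniformly close to it. Step~1 is exactly the paper's Step~1. After that, the paper takes a much more elementary route.

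\textbf{The paper's route.} It first approximates $v$ by \emph{unconstrained} smooth maps $v_j=g+f_j$. Here $g$ is a smooth extension of the hedgehog collar, and $f_j$ mollifies $v-g$, which vanishes near $\p\B_r$; hence $v_j=\hedge$ on a fixed collar. It then composes with a perturbed radial projection $P_a=F_a/|F_a|$, $F_a(z)=z-\chi(z)a$, centred at a generic regular value $a\in\B_\varepsilon$ of $v_j$. This has three consequences.
\begin{enumerate}
\item Since $P_a(z)=z/|z|$ for $|z|\geq 1/2$, the boundary datum survives automatically and no gluing is ever needed.
\item The singular set $v_j^{-1}(a_j)$ is finite, by Sard and the inverse function theorem.
\item The energy on the bad set $\{|v_j|<1/2\}$ is controlled by averaging over $a$. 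The average of $|v_j(x)-a|^{-p}$ over $a\in\B_\varepsilon$ is bounded uniformly in $x$ exactly because $p<k+1$.
\end{enumerate}
This Federer--Fleming/Hardt--Kinderlehrer--Lin trick uses that the target $\sph^k$ is a hypersurface in $\R^{k+1}$ and that the domain is also $(k+1)$-dimensional. It is self-contained, treats $p=k$ and $k<p<k+1$ alike, and gives maps that are smooth off the singular points directly.

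\textbf{What your route buys and costs.} Your route is general: it works for any compact target and any $p$ below the dimension. But it needs more than the statement of Bethuel's theorem. The uniform closeness to $\hedge$ that your gluing requires comes only from the locality of Bethuel's construction. On cubes far from the bad ones, his approximant is the projected mollification $\Pi(v*\phi_\varepsilon)$, and every cube in $\{|x|>\rho\}$ is good once the mesh is small. As you note yourself, the black-box global theorem followed by a cutoff would not suffice.

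Your one-line summary of the bad-cube step also should not be taken literally:
\begin{enumerate}
\item the homogeneous extension is only Lipschitz across the lower-dimensional faces, so a further smoothing is needed;
\item for $p=k$ there is no embedding into $C^0$ on the $k$-skeleton, so staying close to $\sph^k$ there is not available this way.
\end{enumerate}
In effect, you are citing Bethuel's construction rather than reproving it.
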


The above approximation lemma has been essentially established in Bethuel~\cite{Bethuel1991}. We present a detailed proof in the Appendix. 
As a caveat, the number of singular points of $u_n$ is not necessarily uniformly bounded as $n \to \infty$, and the neighbourboods near the boundary in which $u_n$ coincides with the hedgehog are not necessarily uniform in $n$.

\begin{proposition}\label{prop:off-centre}
Let $k \in \mathbb{N}_{\geq 1}$ and $k<p<k+1$. Consider a vector field $\jj \in L^{p/k}_{\rm loc}\left(\mathbb R^{k+1};\mathbb R^{k+1}\right)$ that is smooth away from finitely many isolated singularities $\{s_1,\cdots,s_J\}$ with
\[
 \operatorname{div} \jj
 =\alpha_k\sum_{i=1}^J D_i\delta_{s_i},
 \qquad
 D_i\in\mathbb Z\setminus\{0\},
 \qquad
 \sum_{i=1}^J D_i=1.
\]
Then there exist $t\in\mathbb R^{k+1}$ and $\rho_0>0$ such that
\begin{equation}\label{eq:off-centre-estimate}
 \int_{\B_\rho(t)}|\jj|^{\frac{p}{k}}\,\dd x
 \geq \frac{\alpha_k}{\gamma}\rho^\gamma\qquad\text{for every
$\rho\geq\rho_0$}.
\end{equation}
If, in addition, $\jj(x)=\frac{x}{|x|^{k+1}}$ on $\{|x|\geq1\}$, then
\begin{equation}\label{eq:interior-j-bound}
 \int_{\B_1}|\jj|^{\frac{p}{k}}\,\dd x\geq\frac{\alpha_k}{\gamma}.
\end{equation}
\end{proposition}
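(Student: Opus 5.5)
The plan is to slice $\int_{\B_\rho(t)}|\jj|^{p/k}$ along the concentric spheres $\p\B_s(t)$, $0<s<\rho$, and to bound each slice from below through the flux of $\jj$, with $\gamma:=k+1-p\in\,]0,1[$ and $q:=p/k\in\,]1,\tfrac{k+1}{k}[$.

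\textbf{Step 1 (slice bound).} Fix $t$. For a.e.\ $s$ the sphere $\p\B_s(t)$ avoids the singular set $\{s_1,\dots,s_J\}$. Since $\jj$ is smooth off the $s_i$, the divergence theorem gives
\[
\int_{\p\B_s(t)}\jj\cdot\nu\,\dd\hau^k=\alpha_k N_t(s),\qquad N_t(s):=\sum_{|s_i-t|<s}D_i\in\mathbb Z .
\]
Hölder's inequality on $\p\B_s(t)$, whose area is $\alpha_k s^k$, then yields
\[
\int_{\p\B_s(t)}|\jj|^q\geq \alpha_k|N_t(s)|^q s^{k-p}.
\]
Integrating in $s$ (coarea) gives
\[
\int_{\B_\rho(t)}|\jj|^q\,\dd x\;\geq\;\alpha_k\int_0^\rho |N_t(s)|^q s^{k-p}\,\dd s .
\]
For $\rho\ge\rho_0:=\max_i|s_i-t|$ we have $N_t\equiv1$ on $]\rho_0,\rho[$, so we can subtract $\frac{\rho^\gamma}{\gamma}=\int_0^\rho s^{k-p}\,\dd s$. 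It therefore suffices to find $t$ with nonnegative deficit
\[
\Delta(t):=\int_0^{\infty}\big(|N_t(s)|^q-1\big)s^{k-p}\,\dd s\;\ge\;0 .
\]
The integrand vanishes for $s>\rho_0$.

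\textbf{Step 2 (convexity).} Because $x\mapsto|x|^q$ is convex with tangent line $1+q(x-1)$ at $x=1$, we have $|N|^q-1\ge q(N-1)$ for every integer $N$. Moreover $N_t(s)-1=-\sum_iD_i\mathbf 1_{\{s\le|s_i-t|\}}$, using $\sum_i D_i=1$. Hence
\[
\Delta(t)\;\ge\;-\frac{q}{\gamma}\,F(t),\qquad F(t):=\sum_{i=1}^J D_i|s_i-t|^{\gamma}.
\]
The problem thus reduces to finding a point $t$ with $F(t)\le0$.

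\textbf{Step 3 (choice of $t$), the main obstacle.} If $J=1$, take $t=s_1$ and $F=0$. In general, $F(t)\to+\infty$ as $|t|\to\infty$, since $\sum_i D_i=1$ and $|s_i-t|^\gamma\sim|t|^\gamma$, so $F$ need not be nonpositive everywhere. My first attempt would be to exploit that the convexity bound of Step 2 is far from sharp whenever $|N_t|\ge2$, where $|N|^q-1\ge|N|-1$ and more.

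Concretely, I would try to pair each negative charge with positive charges and choose $t$ at a positive singularity $s_j$ nearest to the negatively charged points, so that $F(s_j)\le0$. Failing that, I would average $\Delta(t)$ over $t$ in a large ball and exchange the order of integration, so that each pair $(s_i,s_j)$ contributes a term of definite sign through the concavity of $r\mapsto r^\gamma$, and then pick $t$ where $\Delta(t)\ge$ its average. I expect this selection of the centre to be the genuinely delicate point of the proof.

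\textbf{Step 4 (second assertion).} If $\jj=x/|x|^{k+1}$ on $\{|x|\ge1\}$, then all singularities lie in $\B_1$. For $s\geq 1$ the slices of $\B_\rho(0)$ are computed exactly: $|\jj|^q=s^{-kq}$ on $\p\B_s$, and integrating over the sphere gives $\alpha_k s^{k-p}$. Comparing $\int_{\B_\rho(0)}|\jj|^q$ with $\int_{\B_\rho(t)}|\jj|^q$ as $\rho\to\infty$, the difference of the explicit exterior contributions tends to $0$, because $|\jj|^q\sim|x|^{-kq}$ and $kq=p>k$, so the symmetric-difference shell has vanishing contribution. Inserting this into \eqref{eq:off-centre-estimate} and subtracting the exact exterior energy $\alpha_k(\rho^\gamma-1)/\gamma$ then yields \eqref{eq:interior-j-bound}.
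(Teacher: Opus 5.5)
\textbf{Genuine gap: the Step~1 reduction cannot work.} The trouble is not only that Step~3 is left open. Slicing all of $\mathbf{B}_\rho(t)$ by spheres centred at a single point $t$ throws away energy that is genuinely needed. There are admissible configurations with $\Delta(t)<0$ for \emph{every} $t\in\mathbb{R}^{k+1}$.

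Take $D_1=D_2=+1$ at $s_1=-e_1$, $s_2=e_1$, and $D_3=-1$ at $s_3=\lambda e_1$ with $0<\lambda<1$. A field realising this is $\sum_i D_i\,(x-s_i)/|x-s_i|^{k+1}$. Write $\sigma$ for the slicing radius.
\begin{itemize}
\item Since $s_3$ is a convex combination of $s_1$ and $s_2$, we have $|t-s_3|\le\max\{|t-s_1|,|t-s_2|\}$ for all $t$. So $N_t(\sigma)=2$ never occurs, and $|N_t|\le 1$ throughout.
\item Hence $|N_t|^q-1\le 0$ everywhere, with value $-1$ wherever $N_t=0$.
\item $N_t$ vanishes on a nonempty interval for every $t$. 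If $t$ is not a singular point, this interval is $]0,\min_i|t-s_i|[$. For $t=s_1,s_2,s_3$ it is $]1+\lambda,2[$, $]1-\lambda,2[$ and $]1-\lambda,1+\lambda[$ respectively.
\end{itemize}
Therefore $\Delta(t)<0$ for all $t$. In other words, your slice bound $\alpha_k\int_0^\rho|N_t(\sigma)|^q\sigma^{k-p}\,\mathrm{d}\sigma$ lies strictly below $\frac{\alpha_k}{\gamma}\rho^\gamma$ for every centre and every large $\rho$. By your Step~2 inequality, $F>0$ everywhere as well. Neither pairing charges nor averaging over $t$ can rescue this, and the convexity step only loses more.

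\textbf{Missing idea: use different centres at different scales.} This is what the paper's colliding-and-merging ball construction does.
\begin{itemize}
\item Each small ball $\mathbf{B}_{r_i}(s_i)$ earns $\frac{\alpha_k}{\gamma}r_i^\gamma$ from spheres centred at $s_i$; only $|D_i|\ge1$ is used.
\item An active ball is grown concentrically until it collides with another. The new annulus carries flux $|Q|\ge1$, so the credit keeps pace.
\item A connected cluster is replaced by one ball of radius $R\le\sum r_i$, using Lemma~\ref{lem:connected-ball-cover}. Its credit set is the union of the old credit sets, and $\sum r_i^\gamma\ge(\sum r_i)^\gamma\ge R^\gamma$ preserves the invariant.
\item Zero-charge balls are frozen until something absorbs them.
\item Since the total charge is $1$, the process ends with a single ball $\mathbf{B}_{\rho_0}(t)$. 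Only then does one slice concentrically about $t$, exactly as in your Step~1 for $\rho\ge\rho_0$.
\end{itemize}
In the example, the pair $\{s_2,s_3\}$ freezes as a dormant ball with credit at least $\frac{\alpha_k}{\gamma}(1-\lambda)^\gamma$. That credit is collected by spheres centred at $s_3$ and $s_2$, and it is precisely what spheres about any single centre miss. Your Step~4 is fine once \eqref{eq:off-centre-estimate} is available.
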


All the balls in Proposition~\ref{prop:off-centre} are $(k+1)$-dimensional; $k=\fp$. We drop the superscript in $\B^{k+1}_r(x_0)$.  Also, recall that $\alpha_k$ is the $k$-dimensional area of the unit $k$-sphere $\sph^k$. One subtlety of Proposition~\ref{prop:off-centre} is that, in order to establish the bound~\eqref{eq:interior-j-bound} on the unit ball $\B_1$, we first need a bound on ``off-centre'' balls $\B_\rho(t)$ for $t\geq 0$ in the form of~\eqref{eq:off-centre-estimate}.

The proof of Proposition~\ref{prop:off-centre} has a combinatorial flavour. It is based on a ``colliding-and-merging balls'' argument, which is somewhat reminiscent of the ``Swiss cheese decomposition'' constructed in Hardt--Sullivan~\cite{hs} and later used in the study of the singularities of $\ep$-minimisers for $n-1\leq p<n$ in Hardt--Lin--Wang~\cite{HardtLinWang1998}. We start from a simple observation:

\begin{lemma}\label{lem:connected-ball-cover}
Let $\left\{\overline \B_{r_i}(a_i)\right\}_{i\in I}$ be a finite family of
closed balls whose union is connected. Then there exist $R>0$ and $a \in \R^{k+1}$ such that
\begin{equation*}
\bigcup_{i \in I}\overline \B_{r_i}(a_i) \subset \overline \B_R(a)\qquad\text{and}\qquad R\leq \sum_{i\in I}r_i.
\end{equation*}
\end{lemma}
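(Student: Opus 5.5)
We argue by induction on the number of balls $|I|$, merging two intersecting balls at each step. For a single ball there is nothing to prove: take $a=a_1$ and $R=r_1$. The key elementary fact concerns two closed balls $\overline\B_{r_1}(a_1)$ and $\overline\B_{r_2}(a_2)$ that intersect, so that $|a_1-a_2|\leq r_1+r_2$. They are contained in a closed ball of radius at most $r_1+r_2$.

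To see this, assume $r_1\geq r_2$. If $|a_1-a_2|+r_2\leq r_1$, the second ball lies inside the first, and $\overline\B_{r_1}(a_1)$ works. Otherwise, take the smallest ball containing both. Its radius is $R'=\tfrac12(|a_1-a_2|+r_1+r_2)\leq r_1+r_2$. Its centre $a'$ lies on the segment $[a_1,a_2]$, at distance $R'-r_1$ from $a_1$. The triangle inequality then gives $\overline\B_{r_i}(a_i)\subset\overline\B_{R'}(a')$ for $i=1,2$.

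For the inductive step, let $G$ be the intersection graph of the family: vertices are the balls, with an edge when two balls meet. Connectedness of the union of finitely many closed (hence closed and connected) sets implies $G$ is connected. Otherwise we could split $I$ into two nonempty classes whose unions are disjoint closed sets, disconnecting the union.

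Choose an edge between two balls and replace them by the merged ball above, whose radius is at most the sum of their radii. The new family has one fewer member. Its union contains the old union, and the sum of its radii is at most the old sum.

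The new union is still connected, since the merged ball meets every ball that either of the two merged balls met. However, the new union may be strictly larger than the old one. So it is cleaner to prove the statement in the form: \emph{if the intersection graph is connected, then the family is contained in a ball of radius $\leq\sum r_i$}. This property is preserved by merging, because the merged ball contains both originals and so intersects all their former neighbours.

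Induction on $|I|$ then terminates in a single ball $\overline\B_R(a)$ with $R\leq\sum_{i\in I}r_i$, containing every original ball.

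The only points needing care are the graph-connectedness reduction and the observation that merging preserves graph-connectedness. The two-ball geometry is routine. An alternative route, avoiding merging altogether, is to fix any $a=a_{i_0}$ and use a path in $G$ to bound $|x-a|\leq 2\sum_{i\neq i_0}r_i+r_{i_0}$. That loses a factor of $2$, which is why the merging argument is the one we use.
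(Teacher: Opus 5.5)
Your proof is correct and follows the paper's route: you merge two intersecting balls into one of radius at most $r_1+r_2$, then induct along the connected intersection graph, and your graph-connectedness reformulation makes explicit what the paper's ``induction along a spanning tree'' leaves implicit. The only cosmetic difference is that the paper uses the ball of radius exactly $r_1+r_2$ centred at the point of $[a_1,a_2]$ dividing it in ratio $r_2:r_1$, which avoids your case split on whether one ball contains the other.
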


\begin{proof}[Proof of Lemma~\ref{lem:connected-ball-cover}]
For two intersecting balls, put $d=|a_1-a_2|\leq r_1+r_2$ and choose
$a \in [a_1,a_2]$ so that $|a-a_1|=\frac{r_2}{r_1+r_2}d$ and $ |a-a_2|=\frac{r_1}{r_1+r_2}d$. Then both balls are contained in $\overline \B_{r_1+r_2}(a)$. The
general assertion follows by induction along a spanning tree of the intersection graph.  \end{proof}

\begin{proof}[Proof of Proposition~\ref{prop:off-centre}]

We divide our arguments into five steps.

\smallskip
\noindent
{\bf Step~1.} For a ball $B$ with $\p B \cap \{s_1, \cdots, s_J\} = \emptyset$, we write
\[
 Q(B):=\sum_{s_i\in B}D_i,
\]
the total charge enclosed by $B$. We say that $B$ is \emph{active} if $Q(B)\neq0$ and \emph{dormant} if $Q(B)=0$.

\smallskip
\noindent
{\bf Step~2.} If the annulus $\B_b(s)\setminus\overline \B_a(s)$ contains no charged
point, then the flux of $\jj$ through \textit{a.e.} sphere $\partial \B_r(s)$, $a<r<b$, equals $\alpha_k Q(\B_a(s))$.  Hence, by Hölder's inequality, we have
\begin{align*}
 \int_{\partial \B_r(s)}|\jj|^{\frac{p}{k}}\,\dd\hau^k \geq
 \frac{\left|\int_{\partial \B_r(s)}\jj\cdot\nu\,
 \dd\hau^k\right|^{\frac{p}{k}}}
 {(\alpha_kr^k)^{{\frac{p}{k}}-1}}=\alpha_k|Q(\B_a(s))|^{\frac{p}{k}} r^{k-p}.
\end{align*}
An integration in $r$ yields that
\begin{equation}\label{eq:annular-flux}
 \int_{\B_b(s)\setminus \B_a(s)}|\jj|^{\frac{p}{k}}\,\dd x
 \geq \frac{\alpha_k}{\gamma}|Q(\B_a(s))|^{\frac{p}{k}} (b^\gamma-a^\gamma).
\end{equation}
Recall $\gamma = k+1-p \in ]0,1[$.

\smallskip
\noindent
{\bf Step~3.} Now, let us choose an initial family of balls  $\mathcal{B}_{\rm init} = \{\B_{r_i}(s_i)\}_{i \in I}$ by requiring that $r_i$'s are so small that these balls are disjoint. Thus, each ball contains 
no charged point other than its centre.  We deduce from~\eqref{eq:annular-flux} by taking $b=r_i$, $a \searrow 0$, and $s=s_i$ that
\[
 \int_{\B_{r_i}(s_i)}|\jj|^{\frac{p}{k}}\,\dd x
 \geq \frac{\alpha_k}{\gamma}|D_i|^q r_i^\gamma
 \geq \frac{\alpha_k}{\gamma}r_i^\gamma.
\]

Given a finite collection $\mathcal{B}$ of balls. For each $B \in \mathcal{B}$, we say that it carries a \emph{credit set} $K_B$ if $\{K_B\}_{B \in\mathcal{B}}$ are mutually disjoint, $K_B \subset B$, and  
\begin{equation}\label{eq:credit-invariant}
 \int_{K_B}|\jj|^{\frac{p}{k}}\,\dd x \geq\frac{\alpha_k}{\gamma}\,[{\rm rad}(B)]^\gamma\qquad\text{for each } B \in \mathcal{B}.
\end{equation}
The initial balls $\mathcal{B}_{\rm init}$ themselves constitute a collection of credit sets.

\smallskip
\noindent
{\bf Step~4.} Now, fix each dormant ball in $\mathcal{B}_{\rm init}$ and enlarge one active ball $B_{\rm act}$ concentrically until it meets another ball for the first time. Before this collision, the enlarged ball (call it $\widehat{B}_{\rm act}$) will have no new charged point. Since its charge is a nonzero integer, we have $|Q(B_{\rm act})|^{p/k}\geq 1$, so \eqref{eq:credit-invariant} is preserved by taking the credit set to be the enlarged ball.

At the time that the enlarged ball collide with some other ball(s), by Lemma~\ref{lem:connected-ball-cover} we may replace the connected cluster of touching balls by a ball ($\widetilde{B}_{\rm repl}$, say) containing them whose radius $R$ is at most the sum of their radii. If $\widetilde{B}_{\rm repl}$ meets other balls, absorb those
as well and repeat this process. At each such ``merger'', we define the new credit set $K_{\rm new}$ to be the union of the old credit sets. That is, whenever a collection of balls collides, their corresponding contributions to the credit set gets fixed once and for all. (The replaced cluster may continue to grow and collide, but their parts of the credit set will change no more.)

In view of the above construction, from the monotonicity of credit sets, the concavity of $s\mapsto s^\gamma$ ($0<\gamma<1$), and that $R\leq \sum_{i\in I}r_i$ by Lemma~\ref{lem:connected-ball-cover}, we deduce that
\begin{equation}\label{credit, xx}
 \int_{K_{\rm new}}|\jj|^{\frac{p}{k}}\,\dd x
 \geq \frac{\alpha_k}{\gamma}\sum_i r_i^\gamma
 \geq \frac{\alpha_k}{\gamma}
       \left(\sum_i r_i\right)^\gamma
 \geq \frac{\alpha_k}{\gamma}R^\gamma.
\end{equation}
In particular, notice that the bound~\eqref{eq:credit-invariant} is preserved, regardless of whether the charge of the merged ball is zero.

We may repeat this above process and arrive at one single ball in finitely many steps. This is because the sum of the charges of all current balls is $1$, so there must be an active ball whenever more than one ball remains. We enlarge an active ball until the next collision and merge. Each merger strictly
decreases the number of balls, so after finitely many mergers, there is
one remaining ball $B_{\rho_0}(t)$ with charge 1.

Now, the desired estimate~\eqref{eq:off-centre-estimate} holds with $\rho = \rho_0$ in view of the credit already accumulated; see~\eqref{credit, xx}. For any $\rho \geq\rho_0$, we keep the centre $t$ fixed and enlarge the ball $B_{\rho_0}(t)$ to $B_{\rho}(t)$. Thus, \eqref{eq:off-centre-estimate} holds by the case $\rho=\rho_0$ and the formula~\eqref{eq:annular-flux}, which takes into account the contributions from the annular region without other singularities.

\smallskip
\noindent
{\bf Step~5.} Finally, let us prove the ``in addition'' part. Let $t$ and $\rho_0$ be as in the previous step. Since $\B_\rho(t)\subset \B_{\rho+|t|}(0)$, for every
sufficiently large $\rho$ we have that
\begin{align*}
 \frac{\alpha_k}{\gamma}\rho^\gamma
 &\leq \int_{\B_{\rho+|t|}(0)}|\jj|^{\frac{p}{k}}\,\dd x \\
 &=\int_{\B_1}|\jj|^{\frac{p}{k}}\,\dd x
   +\alpha_k\int_1^{\rho+|t|}r^{k-p}\,\dd r \\
 &=\int_{\B_1}|\jj|^{\frac{p}{k}}\,\dd x
   +\frac{\alpha_k}{\gamma}
      \bigl((\rho+|t|)^\gamma-1\bigr).
\end{align*}
As $0<\gamma<1$, it holds that
$(\rho+|t|)^\gamma-\rho^\gamma\to0$ as $\rho\to\infty$. Thus~\eqref{eq:interior-j-bound} follows.

The proof of Proposition~\ref{prop:off-centre} is now complete.   \end{proof}

Equipped with Lemma~\ref{lem: approx} and Proposition~\ref{prop:off-centre} established just now, we are at the stage of showing the main result of this section.

\begin{proof}[Proof of Proposition~\ref{prop:sharp-k-jacobian}]
By scaling, we only need to prove for $r=1$.

\smallskip
\noindent
\underline{\bf Case $1<p<n$, $p \notin \mathbb{Z}$.} Denote $k=\fp$ as before. Apply Lemma~\ref{lem: approx} to choose a sequence $\{u_\ell\}$ such that $u_\ell \to u$ strongly in $W^{1,p}(\B_1^{k+1};\sph^k)$, each $u_\ell$ equals $x/|x|$ in a collar of the boundary, and each $u_\ell$ is smooth away from finitely many interior singularities.  Extend $u_\ell$ to
$\widetilde u_\ell:\mathbb R^{k+1}\to \sph^k$ by the hedgehog outside $\B^{k+1}_1$.

Let $\omega_{\sph^k}$ denote the standard  Riemannian volume form on $\sph^k$, so that
$\int_{\sph^k}\omega_{S^k}=\alpha_k$. Set
\[
 \jj_\ell:=\varepsilon_k
 \bigl(\star\widetilde u_\ell^{\#}\omega_{\sph^k}\bigr)^\sharp,
\]
where the fixed sign $\varepsilon_k\in\{-1,1\}$ is chosen so that the
hedgehog produces the outward field $x/|x|^{k+1}$.
The Hodge star $\star$ is taken in $\mathbb R^{k+1}$ and the superscript ${}^\#$ denotes pullback, so
$\star\widetilde u_\ell^{\#}\omega_{S^k}$ is a 1-form. The sharp sign $\sharp$ is the canonical isomorphism turning a 1-form to a vector field.  Since $\na \widetilde u_\ell$ takes values in the $k$-plane $T_{\widetilde u_\ell}\sph^k$, by Cauchy--Binet formula we have that
\[
 \left|\jj_\ell\right|=\left|\Lambda^k \na\widetilde u_\ell\right|.
\]
The form $\widetilde u_\ell^{\#}\omega_{\sph^k}$ is closed away from
the finitely many singularities, while the degree formula on small surrounding
spheres therefore yields that
\[
 \operatorname{div}\jj_\ell
 =\alpha_k\sum_i D_{\ell,i}\delta_{s_{\ell,i}},
 \qquad D_{\ell,i}\in\mathbb Z\setminus\{0\}.
\]
There is no boundary contribution because $u_\ell$ agrees with the
hedgehog in a collar.  Moreover,
\[
\jj_\ell(x)=\frac{x}{|x|^{k+1}}\qquad\text{ for } |x|\geq 1,
\]
so its flux through every sufficiently large sphere is $\alpha_k$; hence, $\sum_iD_{\ell,i}=1$. Thus, by Proposition~\ref{prop:sharp-k-jacobian}, we have that
\[
 \int_{\B^{k+1}_1}\left|\Lambda^k \na u_\ell\right|^{\frac{p}{k}}\,\dd x
 =\int_{\B^{k+1}_1}\left|\jj_\ell\right|^{\frac{p}{k}}\,\dd x
 \geq\frac{\alpha_k}{k+1-p}.
\]

Finally, the multilinear estimate $\left|\Lambda^kA-\Lambda^kB\right|
 \lesssim_k |A-B|\bigl(|A|^{k-1}+|B|^{k-1}\bigr)$ and Hölder's inequality imply that
\[
 \Lambda^k \na u_\ell\longrightarrow\Lambda^k \na u
 \quad\text{strongly in }L^{p/k}(\B^{k+1}_1).
\]
We may now conclude the proof of Proposition~\ref{prop:sharp-k-jacobian} in the case $p \notin \mathbb{Z}$ by sending $\ell \to \infty$.

\smallskip
\noindent
\underline{\bf Case $p\in\{1,2,\ldots,n-1\}$.}  To this end, we first collect a well-known lemma, which states that the cofactor matrix of a gradient is a null Lagrangian. The proof is omitted. 

\begin{lemma}
\label{lem:null-lagrangian-cofactor}
Let \(\Omega\subset\mathbb R^{k+1}\) be a bounded Lipschitz domain and let 
$v,w\in W^{1,k}(\Omega;\mathbb R^{k+1})$ be of the same trace on \(\partial\Omega\). Then
\begin{equation*} 
\int_\Omega \operatorname{tr}(\operatorname{cof} \na v)\,\dd x
=
\int_\Omega \operatorname{tr}(\operatorname{cof}\na w)\,\dd x .
\end{equation*}
Here \(\operatorname{cof}A=D(\det)(A)\), so that $
A^*\operatorname{cof}A
=
(\operatorname{cof}A)^*A
=
(\det A)\,\id_{k+1}.$
\end{lemma}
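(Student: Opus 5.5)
The statement to prove is Lemma~\ref{lem:null-lagrangian-cofactor}: the integral of $\operatorname{tr}(\operatorname{cof}\na v)$ over $\Omega$ depends only on the boundary trace of $v \in W^{1,k}(\Omega;\R^{k+1})$. I would proceed in three steps: write the integrand in divergence form, prove the claim for smooth maps, and then pass to $W^{1,k}$ by density. The last step is where the integrability has to be checked carefully.

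\textbf{Divergence form.} The key input is Piola's identity: for $v\in C^2$, every row of $\operatorname{cof}\na v$ is divergence-free,
\[
\sum_j \p_j (\operatorname{cof}\na v)_{ij}=0 \qquad\text{for each } i .
\]
This follows because $(\operatorname{cof}\na v)_{ij}$ is, up to sign, the Jacobian determinant of the $k$ components $(v^l)_{l\neq i}$ with respect to the variables $(x_m)_{m\neq j}$. Equivalently, $(\operatorname{cof}\na v)_{ij}\,\dd x$ corresponds to the exact form $\dd v^1\wedge\cdots\widehat{\dd v^i}\cdots\wedge \dd v^{k+1}$ paired with $\dd x^j$. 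Contracting with $\delta_{ij}$ then gives the pointwise identity
\[
\operatorname{tr}(\operatorname{cof}\na v)=\sum_i (\operatorname{cof}\na v)_{ii}=\sum_{i,j}\p_j\bigl(x^i(\operatorname{cof}\na v)_{ij}\bigr)=\operatorname{div} G(v),
\]
with $G(v)^j:=\sum_i x^i(\operatorname{cof}\na v)_{ij}$. Cleaner still, $(\operatorname{cof}\na v)_{ij}$ is itself a divergence $\sum_m \p_m(\cdot)$ of expressions that are linear in $v^l$ times $(k-1)$-minors of $\na v$. Consequently $\operatorname{tr}\operatorname{cof}\na v$ is a sum of divergences of terms of the form $v\cdot(\text{$(k-1)$-minors of }\na v)$.

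\textbf{Smooth case.} For $v,w\in C^\infty(\overline\Omega)$ with the same boundary values, integrating by parts turns each side into a boundary integral. These boundary integrals depend only on $v|_{\p\Omega}$ and tangential derivatives, because the $(k-1)$-minors that pair with the normal involve only tangential derivatives. This is the same fact as the pullback form $\dd v^{1}\wedge\cdots$ restricted to $\p\Omega$ depending only on the trace. Hence the two integrals coincide.

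\textbf{Density, the main obstacle.} Suppose $v_m\to v$ in $W^{1,k}$. By the multilinear estimate already used in the proof of Proposition~\ref{prop:sharp-k-jacobian}, $\operatorname{cof}\na v_m\to\operatorname{cof}\na v$ in $L^{1}$, since the entries are $k$-minors controlled by $|A-B|(|A|^{k-1}+|B|^{k-1})$. Thus both sides are continuous in $W^{1,k}$.

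It therefore suffices to handle $v-w\in W^{1,k}_0$. I would write $w=v+\varphi$ with $\varphi\in W^{1,k}_0(\Omega)$ and approximate $\varphi$ by $\varphi_m\in C_c^\infty(\Omega)$ and $v$ by smooth maps $v_m$; since $\Omega$ is Lipschitz, $C^\infty(\overline\Omega)$ is dense in $W^{1,k}$. Then $v_m$ and $v_m+\varphi_m$ are smooth with identical boundary values, so the smooth case gives equality for each $m$. Passing to the limit by the $L^1$-continuity of $\operatorname{cof}$ completes the argument.

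The delicate points are the exact integrability bookkeeping and the requirement that the approximants share traces. Both are handled by approximating $v$ and $\varphi$ separately as above, so no trace-preserving approximation of $v$ itself is needed.
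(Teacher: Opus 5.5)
Your proof is correct. The paper gives no proof of this lemma: it calls it well known and omits the argument. What you wrote is exactly the standard null-Lagrangian proof that the paper alludes to, so there is no different route to compare. The argument has three parts:
\begin{itemize}
\item Piola's identity $\sum_j \partial_j(\operatorname{cof}\nabla v)_{ij}=0$ gives $\operatorname{tr}(\operatorname{cof}\nabla v)=\operatorname{div} G(v)$ with $G(v)^j=\sum_i x^i(\operatorname{cof}\nabla v)_{ij}$ for smooth $v$.
\item The $k\times k$ minors are continuous from $W^{1,k}$ into $L^1$, by the multilinear estimate and H\"older's inequality with exponents $k$ and $k/(k-1)$.
\item You approximate $v$ in $C^\infty(\overline\Omega)$ and $w-v$ in $C_c^\infty(\Omega)$ separately, so no trace-preserving approximation is needed.
\end{itemize}

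Two minor remarks.
\begin{itemize}
\item Writing $w=v+\varphi$ with $\varphi\in W^{1,k}_0(\Omega)$ uses the fact that, on a bounded Lipschitz domain, the kernel of the trace operator on $W^{1,k}$ is exactly $W^{1,k}_0(\Omega)$. This is worth citing explicitly.
\item Because each $\varphi_m$ has compact support, $v_m$ and $v_m+\varphi_m$ agree on a neighbourhood of $\partial\Omega$. Hence $G(v_m+\varphi_m)-G(v_m)$ is a smooth field compactly supported in $\Omega$, and its divergence integrates to zero. Your boundary analysis (that the normal contraction of the cofactor involves only tangential derivatives) is correct, but the density step does not actually need it.
\end{itemize}
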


As \(u\) is sphere-valued, $u \cdot \na u = 0$ \textit{a.e.}. We \emph{claim} that, for some measurable vector field
\(\jj:\B_r^{k+1}\to\mathbb R^{k+1}\), one has
\begin{equation}
\label{eq:cofactor-rank-one}
\operatorname{cof}\na u=u\otimes \jj
\qquad\text{a.e.}.
\end{equation}
Indeed, if \(\operatorname{rank} \na u\leq k-1\), then
\(\operatorname{cof} \na u=0\). If
\(\operatorname{rank} \na u=k\), then $
\ker (\na u)^*=\operatorname{span}\{u\} \equiv \bra u\ket$. On the other hand,
 $(\na u)^*\operatorname{cof} \na u
=
(\det \na u)\id_{k+1}=0,$ so every column of \(\operatorname{cof} \na u\) belongs to $\bra u\ket$. This proves~\eqref{eq:cofactor-rank-one}; in particular, $\operatorname{cof} \na u$ is of rank one. In fact, one may take $
j=(\operatorname{cof}\na u)^* u$.

The entries of \(\operatorname{cof} \na u\) are,
up to signs, the \(k\times k\) minors of \(\na u\). Hence $|\operatorname{cof} \na u|_{\mathrm{HS}} = |\Lambda^k \na u|.$
Since \(|u|=1\), the \emph{claim}~\eqref{eq:cofactor-rank-one} also yields that  $|\operatorname{cof} \na u|_{\mathrm{HS}}=|\jj|$ and $\operatorname{tr}(\operatorname{cof}\na u)=u\cdot \jj$. We thus have the following pointwise calibration condition:
\begin{equation}
\label{eq:pointwise-calibration}
|\Lambda^k \na u|
=
|\jj|
\geq u\cdot \jj
=
\operatorname{tr}(\operatorname{cof}\na u).
\end{equation}

Now, by virtue of Lemma~\ref{lem:null-lagrangian-cofactor}, we have \begin{equation}
\label{eq:null-application}
\int_{\B_r^{k+1}}
\operatorname{tr}(\operatorname{cof}\na u)\,\dd x
=
\int_{\B_r^{k+1}}
\operatorname{tr}(\operatorname{cof}\na u_0)\,\dd x,
\end{equation}
where $u_0$ is the hedgehog. In polar coordinates \(x=\rho\omega\) with $\rho>0$ and $\omega\in\sph^k$, we have $\na u_0(x)
=
\frac1{\rho}\bigl(\id-\omega\otimes\omega\bigr)$ and hence $\operatorname{tr}(\operatorname{cof}\na u_0(x))
=
\rho^{-k}$. Thus,
\begin{align}\label{aa}
\int_{\B^{k+1}_r}\operatorname{tr}(\operatorname{cof}\na u_0)\,\dd x = 
\int_0^r\int_{\sph^k}
\rho^{-k}\rho^k\,\dd\hau^k(\omega)\,\dd\rho
=\alpha_k r.
\end{align}
Inequality~\eqref{eq:sharp-k-jacobian} for $p=k \in \mathbb{N}$ follows directly from~\eqref{eq:pointwise-calibration}, 
\eqref{eq:null-application}, and \eqref{aa}.

We conclude the proof of Proposition~\ref{prop:sharp-k-jacobian} by combining the above two cases.   \end{proof}

\section{A lemma on linear algebra}\label{sec: linear alg lemma}

Let us present an elementary linear algebraic lemma in a fairly general form. In later parts, we shall apply it with $m=n-1$, $k=\fp$, and $V' = \vartheta^\perp$ for some given unit vector $\vartheta \in \sph^{n-1}$. We write $|\cdot|_{\rm HS}$ for the Hilbert--Schmidt norm of a matrix.

\begin{lemma}\label{lem: linear algebra}
Fix an $m$-dimensional vector space $V'$ of $\R^n$. Let $H$ be randomly sampled from  $\gr_{k}\left(V'\right)$ with respect to the Haar measure. The expectation of the $k^{\text{th}}$ Jacobian determinant of a linear transform $T:\R^n \to V'$ restricted to $H$ can be estimated as follows: \begin{align*}
\E_H\left[  \Big(\jac_k\left(T^*\mres H\right)\Big)^2 \right] \leq \left( \frac{|T|_{\rm HS}^2}{m} \right)^k.
\end{align*}
For $k=1$, this is always an equality; while for $k \geq 2$, equality holds if and only if $TT^*$ is a constant multiple of the identity matrix $\id_{m}$.
\end{lemma}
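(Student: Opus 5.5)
Set $A := TT^* : V'\to V'$, which is symmetric positive semidefinite with $\operatorname{tr} A = |T|_{\rm HS}^2$. For a $k$-plane $H\subset V'$ with orthonormal basis $h_1,\dots,h_k$, the Jacobian of $T^*\mres H$ is the $k$-volume of $\{T^*h_i\}$. By Cauchy--Binet,
\[
\Big(\jac_k(T^*\mres H)\Big)^2=\det\big(\langle T^*h_i,T^*h_j\rangle\big)_{i,j}=\det\big(P_H A P_H\big|_H\big),
\]
the determinant of the compression of $A$ to $H$. So we must bound the average, over Haar-random $H\in\gr_k(V')$, of the principal $k\times k$ minors of $A$ in a random orthonormal frame.

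\textbf{Step 1: compute the expectation.} Diagonalise $A$ in an orthonormal eigenbasis of $V'$ with eigenvalues $\lambda_1,\dots,\lambda_m\ge0$. The compression determinant equals $\langle \Lambda^k A\,\xi_H,\xi_H\rangle$, where $\xi_H=h_1\wedge\cdots\wedge h_k$ is the unit simple $k$-vector of $H$. In the basis $e_I=e_{i_1}\wedge\dots\wedge e_{i_k}$, $\Lambda^kA$ is diagonal with entries $\lambda_I=\prod_{i\in I}\lambda_i$. Hence
\[
\E_H\big[(\jac_k)^2\big]=\sum_{|I|=k}\lambda_I\,\E_H\big[\langle\xi_H,e_I\rangle^2\big].
\]
Because the Haar measure is invariant under $O(V')$ and permutations or sign changes of the eigenbasis act transitively on the $e_I$, the quantity $\E\langle\xi_H,e_I\rangle^2$ does not depend on $I$. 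Since $\sum_I\langle\xi_H,e_I\rangle^2=|\xi_H|^2=1$, each term equals $\binom mk^{-1}$. Therefore
\[
\E_H\big[(\jac_k)^2\big]=\binom mk^{-1}e_k(\lambda_1,\dots,\lambda_m),
\]
where $e_k$ is the $k$-th elementary symmetric polynomial.

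\textbf{Step 2: Maclaurin's inequality.} For nonnegative reals,
\[
\binom mk^{-1}e_k(\lambda)\le\Big(\frac{e_1(\lambda)}{m}\Big)^k=\Big(\frac{|T|_{\rm HS}^2}{m}\Big)^k,
\]
which is exactly the claimed bound. For $k=1$ this is an identity. For $k\ge2$, equality in Maclaurin's inequality for nonnegative numbers holds iff all $\lambda_i$ are equal, i.e.\ iff $TT^*=c\,\id_m$. The all-zero case, including $T=0$, is also covered.

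\textbf{Main obstacle.} The delicate point is the equality case of Maclaurin's inequality when some $\lambda_i$ vanish. I would handle it through Newton's inequalities $E_{j}^2\ge E_{j-1}E_{j+1}$ for the normalised symmetric means $E_j=\binom mj^{-1}e_j$. If not all $\lambda_i$ are equal, the strict inequality $E_1^2>E_2$ holds (it is a variance), and chaining Newton's inequalities gives $E_k^{1/k}<E_1$ for every $k\ge2$. One must check that the chain still propagates strictness when $E_k=0$. But then $E_k<E_1^k$ holds trivially, since the $\lambda_i$ are not all equal and hence not all zero, so $E_1>0$.

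A secondary point to verify is the $I$-independence claim. It uses the invariance of $\mu_{\gr_k(V')}$ under the finite subgroup of $O(V')$ permuting the eigenvectors, which is immediate because the measure is induced from Haar measure.
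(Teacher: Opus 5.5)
Your proposal is correct and follows essentially the same route as the paper. The paper also diagonalises $TT^*$ and uses Cauchy--Binet to write $\big(\jac_k(T^*\mres H)\big)^2=\sum_{|I|=k}\big(\prod_{i\in I}\lambda_i\big)(\det M_I)^2$; its minors $\det M_I$ are exactly your Pl\"ucker coordinates $\langle\xi_H,e_I\rangle$. It then uses invariance together with $\sum_I(\det M_I)^2=1$ to get $\binom{m}{k}^{-1}$ for each expectation, and finishes with Maclaurin's inequality. Your treatment is in fact slightly more careful in two places: you handle the equality case when some $\lambda_i=0$, where the paper's termwise equality claims along the Maclaurin chain can fail (though the needed comparison of $\mathfrak{M}_1$ with $\mathfrak{M}_k$ holds), and your final bound correctly has $m$ in the denominator, where the paper has a typographical $\binom{m}{k}$.
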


Recall the notation for the $k$-Grassmannian of $V'$: $$\gr_k(V'):=\Big\{ W' \subset V':\, W' \text{ is a $k$-dimensional subspace of $V'$} \Big\}.$$ It is a compact manifold on which the orthogonal group $O(V')$ acts transitively, topologised by the distance $d(W_1', W_2'):=\left\|\pi_{W_1'} - \pi_{W_2'}\right\|$, the operator norm of the difference between orthogonal projections of $V'$ onto $W_1'$ and $W_2'$. At times, we refer to $W' \in \gr_k(V')$ as a $k$-plane in $V'$.

Also, note that for any index $\gamma' \in ]0,1[$, we have by Jensen's inequality
\begin{align*}
\E_H\left[ \Big(\jac_k\left(T^*\mres H\right)\Big)^{2\gamma'} \right]\leq \left\{\E_H\left[ \Big(\jac_k\left(T^*\mres H\right)\Big)^{2} \right]\right\}^{\gamma'} \leq \left( \frac{|T|_{\rm HS}^2}{m} \right)^{\gamma' k}.
\end{align*}
This shall be later applied with $\gamma' = \frac{p}{2\fp}$.

\begin{proof}[Proof of Lemma~\ref{lem: linear algebra}]
Let $\lambda_1 \geq \cdots  \geq \lambda_{m}\geq 0$ be the eigenvalues of $TT^* \in {\rm End}(V')$. For each index set $I \subset \{1,\ldots,m\}$ with $|I|=k$, denote by $M_I$ the $k\times k$-minor obtained by selecting rows of $\eta_H$ indexed by $I$, where $\eta_H: \R^k \to \R^n$ is the inclusion of a $k$-plane with image ${im}\,\eta_H = H \subset V'$. Then, in view of the identity $$\sqrt{\det\left[\left(\pi_HT T^*\pi_H^*\right)\mres H\right]} = \jac_k\left(T^*\mres H\right)$$  and the Cauchy--Binet formula, we have
\begin{align}\label{jac, 2}
\Big(\jac_k\left(T^*\mres H\right)\Big)^2 = \sum_{I \subset \{1,\ldots,m\},\, |I|=k} \left( \prod_{i\in I} \lambda_i \right)\left(\det M_I \right)^2.
\end{align}

Since $H$ is uniformly distributed with respect to the Haar measure on $\gr_k\left(V'\right)$, the expectation $\E_H\left[\left(\det M_I \right)^2\right]$ is independent of the choice of $I$. By Cauchy--Binet again and the definition of $\eta_H$, we have
\begin{align*}
1 = \det(\eta^*_H\eta_H) = \sum_{I \subset \{1,\ldots,m\},\, |I|=k} \det(M_I)^2,
\end{align*}
so $$\E_H\left[\left(\det M_I \right)^2\right]= {{m}\choose{k}}^{-1}.$$ Note that ${{m}\choose{k}}$ is the number of choices for the index set $I$. Then, in light of \eqref{jac, 2}, we obtain that
\begin{align}\label{jac, 3}
\E_H\left[ \Big(\jac_k\left(T^*\mres H\right)\Big)^2 \right] = \frac{ \sum_{I \subset \{1,\ldots,,\},\, |I|=k}  \prod_{i\in I} \lambda_i }{{{m}\choose{k}}}.
\end{align}

The numerator on the right-hand side of \eqref{jac, 3} satisfies
\begin{align*}
 \sum_{I \subset \{1,\ldots,m\},\, |I|=k}  \prod_{i\in I} \lambda_i  = \sigma_k(\lambda_1, \ldots,\lambda_{m}),
\end{align*}
where $\sigma_k$ designates the $k$-th symmetric polynomial. Maclaurin's inequality ascertains that
$$\mathfrak{M}_1 \geq \mathfrak{M}_2 \geq \cdots \geq \mathfrak{M}_m,$$
where for any nonnegative numbers $\lambda_1, \cdots, \lambda_m$, $$\mathfrak{M}_k:=\left[ \frac{\sigma_k(\lambda_1, \ldots, \lambda_{m})}{{m\choose k}}\right]^{1/k}.$$ Moreover, for $k \geq 2$, equalities are attained if and only if all the $\lambda_j$'s are equal.

Therefore, from \eqref{jac, 3} one infers that
\begin{align*}
\E_H\left[\Big(\jac_k\left(T^*\mres H\right)\Big)^2 \right] \leq \left(\frac{\lambda_1+\cdots+\lambda_{m}}{{{m}\choose k}}\right)^k.
\end{align*}
This proves Lemma~\ref{lem: linear algebra} since $\lambda_1+\cdots+\lambda_{m} = {\rm trace}(TT^*) = |T|_{\rm HS}^2$.    \end{proof}

\section{A lemma of geometric statistics}\label{sec: stats}

In this section, we analyse the distribution of the length of the projection of a unit vector in $\R^n$ onto a randomly sampled $d$-plane. Throughout, we write $\pi_V: \R^n \to V$ as the Euclidean orthogonal projection onto a subspace $V \subset \R^n$. For subsequent developments, it is crucial that the expectation of the negative moments of this length can be explicitly calculated.

\begin{lemma}\label{lem: beta distribution}
Fix any $\vartheta \in \sph^{n-1}$; $2 \leq n \in \mathbb{Z}$. Let $V$ be randomly sampled from $\gr_d(\R^n)$ with $d<n$. Then the length of the $V$-component of $\vartheta$, namely $s := \left|\pi_V\vartheta\right|$, satisfies
\begin{equation}\label{beta distr}
s^2 \sim {\rm Beta}\left( \frac{d}{2}, \frac{n-d}{2} \right).
\end{equation}  
Moreover, it holds for any $p<d$ that
\begin{equation}\label{E s-p}
\E_V\left[s^{-p}\right] = \frac{ \G\left(\frac{n}{2}\right)\G\left(\frac{d-p}{2}\right)}{\G\left(\frac{d}{2}\right)\G\left(\frac{n-p}{2}\right)}.
\end{equation}
\end{lemma}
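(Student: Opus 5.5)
By rotation invariance, I will swap the roles of the random and the fixed object: fix $V=\R^d\times\{0\}\subset\R^n$ and let $\vartheta$ be uniformly distributed on $\snone$. This is legitimate because $V=g V_0$ with $g$ Haar-distributed on $O(n)$, so $|\pi_V\vartheta|=|\pi_{V_0}(g^{-1}\vartheta)|$, and $g^{-1}\vartheta$ is uniform on the sphere.

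Next I realise the uniform vector as $\vartheta=X/|X|$, where $X=(X_1,\ldots,X_n)$ has i.i.d.\ standard Gaussian entries. Then
\[
s^2=\frac{A}{A+B},\qquad A=\sum_{i\le d}X_i^2\sim\chi^2_d,\qquad B=\sum_{i>d}X_i^2\sim\chi^2_{n-d},
\]
with $A$ and $B$ independent. The classical Gamma--Beta identity applies here: if $A\sim{\rm Gamma}(a,1)$ and $B\sim{\rm Gamma}(b,1)$ are independent, then $A/(A+B)\sim{\rm Beta}(a,b)$. The proof is a change of variables $(A,B)\mapsto (A+B,\,A/(A+B))$, which also shows the two new variables are independent. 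Taking $a=d/2$ and $b=(n-d)/2$ (the factor $2$ in $\chi^2$ cancels in the ratio) gives \eqref{beta distr}.

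For \eqref{E s-p}, I compute the moment directly with the Beta density. Writing $t=s^2$,
\[
\E[s^{-p}]=\E[t^{-p/2}]=\frac{1}{B\!\left(\frac d2,\frac{n-d}2\right)}\int_0^1 t^{\frac{d-p}{2}-1}(1-t)^{\frac{n-d}{2}-1}\,\dd t=\frac{B\!\left(\frac{d-p}2,\frac{n-d}2\right)}{B\!\left(\frac d2,\frac{n-d}2\right)}.
\]
The integral converges exactly because $p<d$; this holds for any real $p<d$, including negative $p$. Expanding $B(x,y)=\G(x)\G(y)/\G(x+y)$, the factors $\G\!\left(\frac{n-d}{2}\right)$ cancel, leaving
\[
\frac{\G\left(\frac{d-p}{2}\right)\G\left(\frac n2\right)}{\G\left(\frac{n-p}{2}\right)\G\left(\frac d2\right)},
\]
which is the stated formula.

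The only point needing care is the first step: checking that the invariant probability measure on $\gr_d(\R^n)$ is the pushforward of Haar measure on $O(n)$, as fixed in the Notation. This is what allows the exchange of the random plane for a random unit vector. If the reduction is preferred without Gaussians, one can instead use the explicit density of $(\vartheta_1,\ldots,\vartheta_d)$ for uniform $\vartheta$ on the sphere, which is proportional to $(1-|y|^2)^{(n-d-2)/2}$ on $\B^d$, and then pass to polar coordinates in $y$. Both routes are routine.
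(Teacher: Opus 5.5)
Your proof is correct and follows the paper's argument essentially verbatim. Like the paper, you write $V=RV_0$ with $R$ Haar on $O(n)$, which reduces the problem to a uniform random unit vector, and you realise that vector as a normalised Gaussian, so that $s^2=X/(X+Y)$ with independent $\chi^2_d$ and $\chi^2_{n-d}$ variables, and read off the Beta law. Your explicit evaluation of the $(-p/2)$-th moment from the Beta density, which converges precisely when $p<d$, is just the identity $\E[Z^q]=B(a+q,b)/B(a,b)$ for $q>-a$ that the paper quotes immediately before its proof and applies with $q=-p/2$.
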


For $n-1\leq p <n$ we have $n=d$, so the right-hand side of \eqref{beta distr} is not well-defined. However, in this case $V=\R^n$, $s = |\vartheta|  \equiv 1$, and $\E_V[s^{-p}]=1$, so the identity~\eqref{E s-p} remains valid.

In the proof below, we shall use the fact that if a random variable $Z\sim {\rm Beta}(a,b)$, then for any $q>-a$ it holds that \begin{align*}
\E[Z^q] = \frac{B(a+q,b)}{B(a,b)}
\end{align*}
with $B(x,y):= {\G(x)\G(y)}/{\G(x+y)}$.

\begin{proof}[Proof of Lemma~\ref{lem: beta distribution}]

Let us write $V_0 = \R^d \oplus \{0^{n-d}\} \subset \R^n$.

Since $V$ is uniformly distributed on $\gr_d(\R^n)$, we have $V= RV_0$ with $R$ uniformly distributed on $O(n)$, both with respect to the Haar measures. Hence,
\begin{align*}
s^2 = \left|\pi_{RV_0}\vartheta\right|^2 = \left|\pi_{V_0} \left(R^\top \vartheta\right)\right|^2,
\end{align*}
where $R^\top \vartheta$ is randomly sampled from $\sph^{n-1}$. 

Denote a generic point on $\sph^{n-1}$ as ${(z_1, \ldots, z_n)}/{\sqrt{z_1^2+\cdots + z_n^2}}$. Then we may express
\begin{align*}
s^2 = \frac{z_1^2 + \cdots + z_d^2}{z_1^2+\cdots + z_n^2}\qquad \text{ with } z_j \sim \mathcal{N}(0,1).
\end{align*}
Here $X = z_1^2 + \cdots + z_d^2 \sim \chi_d^2$ and $Y = z_{d+1}^2 + \cdots + z_n^2 \sim \chi_{n-d}^2$, both obeying chi-square distributions. Then the law for $s^2$ is a beta distribution:
\begin{align*}
s^2 = \frac{X}{X+Y} \sim {\rm Beta}\left( \frac{d}{2}, \frac{n-d}{2}\right).
\end{align*}
This completes the proof of Lemma~\ref{lem: beta distribution}.  \end{proof}

We also observe the following:
\begin{remark}\label{rem: independence}
If $V$ is randomly sampled over $\gr_d(\R^n)$, then $H = V \cap \vartheta^\perp$  is randomly sampled over $\gr_{d-1}\left(\vartheta^\perp\right)$. Indeed, as the law of $V$ is $O(n)$-invariant,  the  law of $H$ is $\mathcal{K}$-invariant, where $$\mathcal{K}:=\left\{g\in O(n):\, g\vartheta=\vartheta \right\} = O\left(\vartheta^\perp\right) \cong O(n-1),$$ which acts on $\gr_{d-1}\left(\vartheta^\perp\right)$ transitively. In addition, the random variable $H\in \gr_{d-1}\left(\vartheta^\perp\right)$ is independent of $s = |\pi_V\vartheta|$.
\end{remark}

\section{Slicing over randomly sampled subspaces}

We now embark on the proof of the main Theorem~\ref{thm: main}. The idea is to bound the $p$-energy $\ep[u]$ of $u \in \anp$ from below:
\begin{align*}
\ep[u] \geq c_{n,p} \iint \left|\jac_{\fp}(\na_V w_V(x))\right|^{\fpp}\,\dd x\,\dd V,
\end{align*}
with the integral taken over $(x,V) \in \B^n \times \gr_{\fp+1}(\R^n)$; see~\eqref{vip, a} for the precise formulation. Here, the mapping $w_V$ is just the projection $\pi_Vu$ normalised by its length, with $V$ uniformly distributed over the Grassmannian $\gr_{\fp+1}(\R^n)$; the symbol $\na_V$ means the gradient taken only along the vectors in the linear space $V$. Observe that $w_V$, viewed as a function defined on a ball in $V$, is again a sphere-valued $W^{1,p}$-mapping whose trace coincides with that of the hedgehog (Lemma~\ref{lem: wV}). Then, from Proposition~\ref{prop:sharp-k-jacobian} we infer the bound~\eqref{vip, b},   which reads schematically: 
\begin{align*}
 \iint \left|\jac_{\fp}(\na_V w_V(x))\right|^{\fpp}\,\dd x\,\dd V \geq c'_{n,p}.
\end{align*}

Combining the two estimates above, we deduce that $\ep[u] \geq c_{n,p} \times c'_{n,p}$. It is crucial that the constants $c_{n,p}$ and $c'_{n,p}$ are explicit; in fact, their product is precisely the $p$-energy of the hedgehog. This proves the minimality of $\hedge$.

\subsection{Geometric set up}\label{subsec: geometric setup}

For the above purpose, let us first introduce the geometric setting for our slicing argument. In what follows, we take $n \geq 2$ and $1<p<n$  as in Theorem~\ref{thm: main}, and set
\begin{align*}
2 \leq d := \fp+1 \leq n,\qquad 0<\gamma := d-p \leq 1.
\end{align*}

Given an $n$-dimensional unit vector $\vartheta \in \sph^{n-1}$ and a $d$-plane $V$ in $\R^n$, we take $H$ to be the intersection of $V$ with the orthogonal complement of the span of $\vartheta$:
\begin{equation}\label{def, V and H}
V \in \gr_d(\R^n), \qquad H := V \cap \vartheta^\perp. 
\end{equation}
In particular, $V=\R^n$ when $n-1 \leq p <n$. Observe that with $\vartheta$ given, $\dim H = \dim V -1 = \fp$ for \textit{a.e.} $V$ with respect to the Haar measure on the Grassmannian $\gr_d(\R^n)$.   Here and hereafter, all the projections, norm of functions or tensors, measures, and inner products, etc., are Euclidean unless otherwise specified. We also write $\eta_H: \R^\fp \to \R^n$ for the inclusion of a $\fp$-plane with image ${im}\,\eta_H = H \subset V \subset \R^n$.

Consider a linear map $T \in {\rm Hom}\left(\R^n; \vartheta^\perp\cong\R^{n-1}\right)$. Then set
\begin{equation}\label{def: M, S}
\begin{cases}
TT^* \in {\rm End}\left(\vartheta^\perp\right),\\
S:= \pi_H T \pi_V \in {\rm Hom}\left(\R^n; H\right).
\end{cases}
\end{equation} 
It holds that $SS^* \leq \pi_H  TT^* \pi_H$ on $H$, as square matrices or quadratic forms of size $\fp \times \fp$. Using the monotonicity of determinant, we deduce that
\begin{equation*}
\left|\Lambda^\fp S\right|^2 \leq \det\left(\pi_H TT^*\pi_H \mres H\right) = \Big(\jac_\fp \left(T^*\mres H\right)\Big)^2.
\end{equation*}  

The geometric layout above shall be specialised to 
\begin{align*}
\vartheta = u(x)\qquad\text{and}\qquad T = \na u (x).
\end{align*}
When $u \in W^{1,p}\left(\B^n; \sph^{n-1}\right)$, $T$ is a linear mapping from $\R^n$ to $\vartheta^\perp$ understood as $$T\xi = \xi \cdot \na u(x) = D_\xi u(x),$$ which is valid for \textit{a.e.} $x \in \B^n$.

\subsection{Slicing of $\B^n$} 

We now slice the $n$-dimensional unit ball by randomly distributed affine $d$-planes $V \in \gr_{d}(\R^n)$, where $d = \fp+1 \in [2,n]$ as before.

For each $x \in \R^n$, one has the unique decomposition
\begin{align}\label{y, xi decomposition}
x = y+\xi \qquad\text{where } y \in V^\perp \text{ and } \xi \in V.
\end{align}
Given $y \in V^\perp$,  consider the projection of the intersection of $\B^n$ with the affine space $y+V$ onto the subspace $V$.  In view of the Pythagorean theorem and~\eqref{y, xi decomposition}, this is nothing but the ball in the $d$-dimensional subspace $V$ centred at $0$ with radius $\sqrt{1-|y|^2}$. That is, 
\begin{equation*}
\B^{V}_{\sqrt{1-|y|^2}} = \Big\{\xi \in V:\, y+\xi \in \B^n\Big\}.
\end{equation*}

Also note by Fubini's theorem that 
\begin{align}\label{fubini}
\int_{\B^n}f(x)\,\dd x = \int_{\B^{V^\perp}_1} \int_{\B^V_{\sqrt{1-|y|^2}}}f(y+\xi)\,\dd\xi\,\dd y
\end{align} 
for each integrable function $f$ on $\B^n$.

\subsection{An associated hedgehog-like map}

For $u\in \anp$, consider a hedgehog-like map associated to it:
\begin{equation}\label{wV, def}
w_V(x):= \frac{\pi_Vu(x)}{|\pi_Vu(x)|}\qquad\text{for } V \in \gr_d(\R^n).  
\end{equation}For ease of notation, we write $$\rho(y):=\sqrt{1-|y|^2}\qquad \text{ for $y \in V^\perp.$}$$ Recall also $y,\xi$ from the orthogonal decomposition~\eqref{y, xi decomposition}. 

\begin{lemma}\label{lem: wV}
Given $u \in \anp$. The map $w_V$ defined in \eqref{wV, def} lies in $W^{1,p}\left(\B^n; \sph(V)\right)$ for \emph{a.e.} $V \in \gr_d(\R^n)$. Its gradient with respect to $V$-directions is given by
\begin{align*}
\na_Vw_V = \frac{\pi_H (\na u) \pi_V}{|\pi_V u|}.
\end{align*} 
Moreover, $w_{V,y}(\xi):=w_V(y+\xi)$ lies in $W^{1,p}\left(\B^V_{\rho(y)}; \sph(V) \right)$, with
\begin{align*}
\na_\xi w_{V,y}(\xi) = \na_Vw_V(y+\xi)\qquad \text{and}\qquad w_{V,y}\Big|_{\p\B^V_{\rho(y)}}(\xi) = \frac{\xi}{|\xi|}
\end{align*}
for \emph{a.e.} $y \in V^\perp$.

\end{lemma}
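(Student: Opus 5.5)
The main difficulty is that $w_V$ is singular where $\pi_V u = 0$. The plan is to show that this zero set is harmless for a.e. $V$, by proving that $|\pi_V u|^{-p}$ is integrable on $\B^n$ for a.e. $V$. Integrating over the Grassmannian and applying Tonelli together with Lemma~\ref{lem: beta distribution} (with $\vartheta = u(x)$, $|u(x)|=1$ a.e.) gives
\begin{align*}
\int_{\gr_d(\R^n)}\int_{\B^n} |\pi_V u(x)|^{-p}\,\dd x\,\dd V = |\B^n|\,\frac{\G\left(\frac n2\right)\G\left(\frac{d-p}2\right)}{\G\left(\frac d2\right)\G\left(\frac{n-p}2\right)}<\infty,
\end{align*}
since $p<d=\fp+1$. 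Hence for a.e. $V$ we have $|\pi_V u|^{-p}\in L^1(\B^n)$, and in particular $\pi_V u\neq 0$ a.e. In the case $d=n$ the map $\pi_V$ is the identity and there is nothing to prove.

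Next I will compute the gradient. Let $\Phi(z)=z/|z|$ on $V\setminus\{0\}$, whose differential is $D\Phi(z)=|z|^{-1}\pi_{z^\perp\cap V}$. For $\varepsilon>0$, take smooth bounded truncations $\Phi_\varepsilon$ that agree with $\Phi$ on $\{|z|\ge\varepsilon\}$ and satisfy $|D\Phi_\varepsilon|\lesssim |z|^{-1}$. The chain rule for Lipschitz functions composed with $W^{1,p}$ maps gives $\Phi_\varepsilon(\pi_V u)\in W^{1,p}$. Its gradient is dominated by $|\na u|/|\pi_V u|$, and this quantity lies in $L^{p'}$ only up to Hölder, so the domination needs care. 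The argument I will use is that for the slice gradient we only need the $V$-directional derivatives, and Lemma~\ref{lem: linear algebra}/Jensen-type averaging again controls $\E_V\int |\na u|^p|\pi_Vu|^{-p}$. To handle this, I would condition on $x$: by Remark~\ref{rem: independence}, $s=|\pi_V u(x)|$ is independent of $H$, and averaging over $H$ bounds the numerator by $|\na u|^p$. That yields $\E_V\int |\na u|^p|\pi_V u|^{-p}\,\dd x\le C\int|\na u|^p$. I expect this decoupling to be the main obstacle: it requires that the relevant projected gradient factors as a function of $H$ times $s^{-1}$. Granting it, dominated convergence as $\varepsilon\to0$ gives $w_V\in W^{1,p}$ for a.e. $V$. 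Finally I will identify $D\Phi(\pi_Vu)\,\pi_V\na u$ restricted to $V$. Since $u\cdot\na u=0$, the image of $\na u$ lies in $u^\perp$. Projecting with $\pi_V$ and then onto the orthogonal complement of $\pi_V u$ inside $V$ lands exactly in $V\cap u^\perp=H$; a short check shows $\pi_{(\pi_Vu)^\perp\cap V}\pi_V=\pi_H$ on $u^\perp$. This gives the stated formula.

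For the slices, I will apply Fubini~\eqref{fubini} to $|w_V|^p+|\na_V w_V|^p$. This shows that for a.e. $y\in V^\perp$ the restriction $w_{V,y}$ lies in $W^{1,p}(\B^V_{\rho(y)})$, with weak gradient equal to the restriction of $\na_V w_V$, using the standard ACL characterisation of Sobolev functions on a.e. parallel slice. For the trace, first note that for a.e. $y$ the trace of $u(y+\cdot)$ on $\p\B^V_{\rho(y)}$ equals the restriction of the trace of $u$, namely $y+\xi$. Composing with the projection and normalisation, and using $\pi_V(y+\xi)=\xi\neq0$ on the slice boundary, gives the boundary value $\xi/|\xi|$. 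Continuity of $\Phi$ away from $0$ and the fact that traces commute with this composition (via approximation by smooth maps) justify this step.
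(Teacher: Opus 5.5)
Your outline is the paper's proof: truncate $z\mapsto z/|z|$ near $0$, dominate the gradient by $|\na u|/|\pi_V u|$, show this lies in $L^p(\B^n)$ for a.e.\ $V$ by averaging over $\gr_d(\R^n)$ with Lemma~\ref{lem: beta distribution}, conclude by dominated convergence, and slice with Fubini (your trace argument is more careful than the paper's). The ``decoupling'' you call the main obstacle and leave granted is not actually needed: the numerator $|\na u(x)|^p$ of the dominating function does not depend on $V$, so Tonelli gives $\int_{\gr_d(\R^n)}\int_{\B^n}|\na u|^p|\pi_V u|^{-p}\,\dd x\,\dd\mu_{\gr_d(\R^n)}(V)=c_{n,p}\int_{\B^n}|\na u|^p\,\dd x$ directly, where $c_{n,p}$ is the constant in \eqref{E s-p}; this controls the full gradient, as required for $w_V\in W^{1,p}(\B^n)$, uses neither Remark~\ref{rem: independence} nor Lemma~\ref{lem: linear algebra}, and is exactly the paper's step.
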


This is nontrivial as $f \in W^{1,p}$ and $f \neq 0$ \textit{a.e.} alone cannot ensure $f/|f| \in W^{1,p}$. As usual, the boundary value of $ w_{V,y}$ is understood in the sense of trace. Recall that $\mu_{\gr_d(\R^n)}$ is the normalised Haar measure on $\gr_d(\R^n)$.

\begin{proof}[Proof of Lemma~\ref{lem: wV}]

To see that $w_V$ is well defined, we first check that its denominator is nonzero \textit{a.e.}.  Indeed, given $\vartheta \in \sph^{n-1}$, we have $\pi_V\vartheta=0$ if and only if $V \subset \vartheta^\perp$, so $$\mu_{\gr_d(\R^n)}\left(\left\{V \in \gr_d(\R^n):\,\pi_V\vartheta=0 \right\}\right)=0.$$ Taking $\vartheta = u(x)$, we deduce from Fubini--Tonelli that
\begin{align*}
0 = \int_{\gr_d(\R^n)} \leb^n\Big( \left\{x \in \B^n:\,\pi_V u(x)=0 \right\}\Big)\,\dd \mu_{\gr_d(\R^n)}(V).
\end{align*}
Thus, for $\mu_{\gr_d(\R^n)}$-\textit{a.e.} $V \in \gr_d(\R^n)$ and $\leb^n$-\textit{a.e.} $x \in \B^n$, we have $\pi_Vu(x)\neq 0$.

To proceed, for each given $V \in \gr_d(\R^n)$, we consider
\begin{align*}
w_{V,\e}(x):= \frac{\pi_Vu(x)}{\max\{|\pi_V u(x)|,\e\}}.
\end{align*}
Clearly $w_{V,\e} \in W^{1,p}\left(\B^n; V\right)$ since $u \in \anp$, and one also has the pointwise bound: $$|\na w_{V,\e}| \leq \frac{|\pi_V\na u|}{|\pi_V u|}.$$ Once we show that the right-hand side is in $L^p$, it holds by the Lebesgue dominated convergence theorem and the \textit{a.e.} convergence $w_{V,\e} \to w_V$ as $\e \searrow 0$ that $w_V \in W^{1,p}$. The identity $\na_\xi w_{V,y}(\xi) = \na_Vw_V(y+\xi)$ follows immediately from the definition of $w_{V,y}$, and its boundary condition follows from that of $u \in \anp$.

It remains to check that 
\begin{align*}
\frac{|\pi_V\na u|}{|\pi_V u|} \in L^p(\B^n)\qquad\text{for a.e. $V \in \gr_d(\R^n)$}.
\end{align*}
To this end, recall from the geometrical statistical Lemma~\ref{lem: beta distribution} that 
\begin{align*}
\E_V\big[ |\pi_Vu(x)|^{-p} \big] = \int_{\gr_d(\R^n)} |\pi_Vu(x)|^{-p}\,\dd \mu_{\gr_d(\R^n)}(V)  = c_{n,p} < \infty. 
\end{align*}
for $\leb^n$-\textit{a.e.} $x \in \B^n$; here $c_{n,p}=\frac{ \G\left(\frac{n}{2}\right)\G\left(\frac{d-p}{2}\right)}{\G\left(\frac{d}{2}\right)\G\left(\frac{n-p}{2}\right)}$. Thus, using the pointwise bound $|\pi_V(\na u)| \leq |\na u|$, we deduce that
\begin{align*}
\int_{\gr_d(\R^n)} \left(\frac{|\pi_V\na u|}{|\pi_V u|}\right)^p \,\dd \mu_{\gr_d(\R^n)}(V) \leq c_{n,p} |\na u|^p.
\end{align*}
The right-hand side is integrable over $\B^n$ since $u \in W^{1,p}$. 
 This proves Lemma~\ref{lem: wV}.   \end{proof}

Moreover, for each $\vartheta =u(x) \in \sph^{n-1}$, $V$ randomly sampled from $\gr_d(\R^n)$, and $H:= V \cap \vartheta^\perp$ as above, it holds that
\begin{equation}\label{decomposition of V}
V = H \oplus \bra w_V \ket.
\end{equation} 
Indeed, $u \cdot w_V = \frac{u\cdot \pi_Vu}{|\pi_Vu|} = |\pi_Vu| \neq 0$ for \textit{a.e.} $(x,V)$.  As $w_V$ is of unital length, we have the decomposition:
\begin{equation*}
\id_V = \pi_H + w_V \otimes w_V,
\end{equation*}
in the sense that $v=\pi_H(v) + (w_V\cdot v)w_V$ for every $v \in V$.

\section{Conclusion}\label{sec: conclusion}

With the preparations in the preceding sections, we are now ready to prove our main Theorem~\ref{thm: main}. We first show that the hedgehog $u_0(x)=\hedge$ is an $\ep$-minimiser over $\anp$ for any $1<p<n \in \mathbb{Z}_{\geq 2}$, and then prove its uniqueness.

\subsection{Minimality of hedgehog}\label{subsec: min}

We first recast the computations for $\na_V w_V$, the $V$-gradient of the associated hedgehog, into the earlier framework in \S\S\ref{sec: linear alg lemma} and \ref{subsec: geometric setup}. Here, as before, $d=\fp+1$, $\gamma=d-p$, $\vartheta = u(x) \in \sph^{n-1}$, $T = \na u (x) \in {\rm Hom}(\R^n; \vartheta^\perp)$, $V \in \gr_d(\R^n)$, and $H = V \cap \vartheta^\perp$.  Everything in this subsection is understood in the $\leb^n$-\textit{a.e.} sense of $x \in \B^n$ unless otherwise specified, and we often suppress the $x$ variable to unburden the notation.

In view of Lemma~\ref{lem: wV}, 
we have
\begin{align*}
\na_V w_V = \frac{S}{s},\qquad s = |\pi_VT|,
\end{align*}
where $S = \pi_H T\pi_V  \in {\rm Hom}(\R^n;H)$ as in~\eqref{def: M, S}. Since $\pi_H TT^* \pi_H - \pi_H T\pi_V T^*\pi_H \geq 0$ as square matrices, we have by the monotonicity of determinant that
\begin{align*}
 \left[\jac_\fp(\pi_H T \pi_V)\right]^2 \leq \det \left(\pi_H TT^* \pi_H\right),
\end{align*}
and hence by Jensen's inequality,
\begin{align}\label{pointwise, jac est}
\left(\jac_\fp \left( \na_V w_V \right) \right)^\fpp \leq s^{-p} \left[\det \left(\pi_H TT^* \pi_H\right)\right]^{\frac{p}{2\fp}}.
\end{align}

Integrating over the Grassmannian $\gr_d(\R^n)$ and substituting $T=\nu u(x)$, we deduce that
\begin{align*}
\E_V\left[\Big(\jac_\fp \left( \na_V w_V \right) \Big)^\fpp\right] &\leq \E_V \left[s^{-p}\,\Big(\det \left(\pi_H TT^* \pi_H\right)\Big)^{\frac{p}{2\fp}}\right]\\
&\leq \frac{ \G\left(\frac{n}{2}\right)\G\left(\frac{\gamma}{2}\right)}{\G\left(\frac{\fp+1}{2}\right)\G\left(\frac{n-p}{2}\right)} (n-1)^{-\frac{p}{2}} |\na u|_{\rm HS}^{p},
\end{align*}  
thanks to the independence of $H$ and $s$ (see Remark~\ref{rem: independence}), the uniform distribution of $H$,  the pointwise determinant bound~\eqref{pointwise, jac est}, as well as the linear algebraic Lemma~\ref{lem: linear algebra} and the statistical Lemma~\ref{lem: beta distribution}. This inequality holds for \textit{a.e.} $x \in \B^n$ and remains valid for $n=\fp+1$. Hence,
\begin{align}\label{vip, a}
&\int_{\gr_d(\R^n)}\int_{\B^n} \Big(\jac_\fp \left( \na_V w_V (x)\right) \Big)^\fpp \,\dd x\,\dd\mu_{\gr_d(\R^n)}(V)\nonumber\\
&\qquad \qquad \leq  \frac{ \G\left(\frac{n}{2}\right)\G\left(\frac{\fp+1-p}{2}\right)}{\G\left(\frac{\fp+1}{2}\right)\G\left(\frac{n-p}{2}\right)} (n-1)^{-\frac{p}{2}} \ep[u],
\end{align}
thanks to Fubini's theorem.

On the other hand, from the identity~\eqref{fubini} and Lemma~\ref{lem: wV}, we infer that
\begin{align*}
\int_{\B^n} \Big(\jac_\fp \left( \na_V w_V (x)\right) \Big)^\fpp \,\dd x = \int_{\B^{V^\perp}_1} \int_{\B^V_{\rho(y)}} \Big(\jac_\fp \left( \na w_{V,y} (\xi)\right) \Big)^\fpp \,\dd \xi\,\dd y.
\end{align*}
Recall $w_{V,y} \in W^{1,p}\left({\B^V_{\rho(y)}};\sph(V)\right)$ from Lemma~\ref{lem: wV}; also $\rho(y) = \sqrt{1-|y|^2}$. Since $d=\dim V=\fp+1$, we may apply Proposition~\ref{prop:sharp-k-jacobian} with $r=\rho(y)$ and $\gamma = \fp+1-p$ to deduce that
\begin{align*}
\int_{\B^V_{\rho(y)}} \Big(\jac_\fp \left( \na w_{V,y} (\xi)\right) \Big)^\fpp \,\dd \xi \geq \frac{\alpha_{\fp}}{\gamma}\rho(y)^\gamma.
\end{align*}
Hence,
\begin{equation}\label{xx}
\int_{\B^n} \Big(\jac_\fp \left( \na_V w_V (x)\right) \Big)^\fpp \,\dd x \geq \frac{\alpha_\fp}{\gamma} \int_{\B_1^{V^\perp}} (1-|y|^2)^{\frac{\gamma}{2}}\,\dd y.
\end{equation}
But $\B_1^{V^\perp}$ is the unit ball in $V^\perp \cong \R^{n-d} = \R^{n-1-\fp}$, so the integral on the right-hand side can be evaluated explicitly, by invoking various classical identities on special functions: 
\begin{align*}
\int_{\B_1^{V^\perp}} (1-|y|^2)^{\frac{\gamma}{2}}\,\dd y &= \alpha_{n-d-1} \int_0^1 r^{n-d-1} (1-r^2)^{\frac{\gamma}{2}}\,\dd r \\
&= \frac{\alpha_{n-d-1}}{2} B\left( \frac{n-d}{2}, 1+ \frac{\gamma}{2} \right)\\
&= \pi^{\frac{n-d}{2}} \frac{\G\left(1+\frac{\gamma}{2}\right)}{\G\left(1+\frac{n-d+\gamma}{2}\right)}.
\end{align*}
In addition, using the identities $\alpha_\fp = \frac{2\pi^{\frac{d}{2}}}{\G\left(\frac{d}{2}\right)}$, $\G(1+z)=z\G(z)$, $d=\fp+1$, and $\gamma=\fp+1-p$, we deduce from \eqref{xx} that
\begin{align}\label{vip, b}
&\int_{\gr_d(\R^n)}\int_{\B^n} \Big(\jac_\fp \left( \na_V w_V (x)\right) \Big)^\fpp \,\dd x\,\dd\mu_{\gr_d(\R^n)}(V)\nonumber\\
&\qquad\qquad \geq  \frac{2\pi^{\frac{n}{2}} \G\left( \frac{\fp+1-p}{2} \right)}{ (n-p) \G\left( \frac{\fp+1}{2}\right)  \G\left(\frac{n-p}{2}\right)},
\end{align}
where $\mu_{\gr_d(\R^n)}$ is the normalised Haar measure.

Therefore, putting together the bounds in~\eqref{vip, a} and \eqref{vip, b}, we arrive at \begin{align}\label{vip, c}
\ep[u] \geq (n-1)^{\frac{p}{2}} \frac{2\pi^{\frac{n}{2}}}{(n-p)\G\left(\frac{n}{2}\right)} = \frac{\alpha_{n-1}}{n-p}(n-1)^{\frac{p}{2}}.
\end{align}

For the hedgehog~$u_0(x) = \hedge$, it holds that
\begin{align*}
|\na u_0(x)|^p = (n-1)^{\frac{p}{2}} |x|^{-p}
\end{align*}
and hence that
\begin{align}\label{hedge hog p energy}
\ep[u_0] = (n-1)^{\frac{p}{2}} \alpha_{n-1} \int_0^1 r^{-p+n-1}\,\dd r =  \frac{\alpha_{n-1}}{n-p}(n-1)^{\frac{p}{2}}.
\end{align}
Therefore, we conclude from \eqref{vip, c} and \eqref{hedge hog p energy} that 
\begin{align*}
\inf_{u \in \mathscr{A}_{n,p}}\ep[u] = \ep[u_0].
\end{align*}
This proves the $p$-energy minimality of the hedgehog.

\subsection{Uniqueness}

Finally, we show that the hedgehog $u_0(x)=\hedge$ is the unique $\ep$-minimiser in $\anp$ by considering three separate ranges of indices: $p \in ]1,n-1[$, $p=n-1$, and $p \in ]n-1,n[$.

Upon the completion of the proof, an audit by GPT 5.6 ultra pro pointed out that once the $\ep$-minimality of the hedgehog is known, uniqueness follows directly from C. Wang~\cite[Theorem~2]{Wang1998}, for all $1<p<n$. Thus, the remaining parts of the paper should be read as an alternative proof of the uniqueness of $\ep$-minimiser.

\begin{proof}[Alternative proof of the uniqueness of minimiser]

In this proof, assume that $u$ is an $\ep$-minimiser over $\anp$. In view of~\S\ref{subsec: min}, we have $\ep[u]=\ep[u_0]$ and essentially all the inequalities in the preceding parts become equalities.

\smallskip
\noindent
{\bf Case~1: $1<p<n-1$.} In light of the case of equality in Lemma~\ref{lem: linear algebra}, we have 
\begin{equation}\label{T*T, uniqueness}
 \na u(x) (\na u(x))^* = \lambda(x) \,\id_{u(x)^\perp}
\end{equation} 
for \textit{a.e.} $x\in\B^n$. Take $V \in \gr_d(\R^n)$ and $H := V \cap u(x)^\perp$ as before. As in the previous arguments (see, \textit{e.g.}, the derivations leading to \eqref{pointwise, jac est} in \S\ref{subsec: min}), one has  the pointwise matrix inequality:
 \begin{equation}\label{leq, uniqueness}
 (\na u)\pi_V (\na u)^* \leq (\na u)(\na u)^*,
 \end{equation}
from which it follows that
 \begin{equation}\label{zz}
 \jac_\fp\left(\pi_H (\na u) \pi_V\right)\leq
 \jac_\fp\left(\pi_H \na u\right).
 \end{equation}
 Here and hereafter, we suppress the variable $x \in \B^n$ for notational simplicity.

Let us \emph{claim} that 
\begin{equation*}
\text{The equality in } \eqref{zz} \text{ holds if and only if } (\na u)^* H \subset V.\tag{V}
\end{equation*}
\begin{proof}[Proof of the claim~(V)]
The ``if'' part is immediate. For the ``only if'' part, we set 
\begin{align*}
 M:=\pi_H \na u \in {\rm Hom}\left(\R^n;H \right).
 \end{align*}
Observe by \eqref{T*T, uniqueness} that either $\lambda=0$ for some $x\in \B^n$, at which $\na u$ is also the zero matrix, or $M$ is of maximal rank. In the former case, the \emph{claim}~(V) holds trivially, so from now on we work with the latter case, namely that
\begin{equation}\label{M has maximal rank}
{\rm rank}(M)=\dim H = d-1=\fp.
\end{equation}

To proceed, in view of the pointwise matrix inequality~\eqref{leq, uniqueness}, one has  \begin{align}\label{M-tilde ineq}
 {\bf 0} \leq \widetilde{\M}:= (MM^*)^{-\frac{1}{2}} M\pi_V M^* (MM^*)^{-\frac{1}{2}} \leq \id_H.
 \end{align}
If the equality~\eqref{zz} holds, then $\det(M\pi_V M^*) = \det(MM^*)$ and hence $\det \widetilde{\M} = 1$. But the matrix inequality~\eqref{M-tilde ineq} implies that all the eigenvalues of $\widetilde{\M}$ lie in the interval $[0,1]$. Thus $\widetilde{\M}=\id_H$, so the pointwise identity follows:
\begin{equation}\label{MM* identity}
M\pi_V M^* = MM^*
\end{equation} 
as an element of ${\rm End}(H)$.

We may now conclude the proof. Indeed, \eqref{MM* identity} is equivalent to $M\left(\id_{\R^n}-\pi_V\right)M^*={\bf 0}.$ For any $v \in {\rm im}\left[\left(\id_{\R^n}-\pi_V\right)M^*\right]$, say $v=\left(\id_{\R^n}-\pi_V\right)M^*h$ for some $h\in H$, then from $Mv \cdot h = 0$ one deduces that $\left(\id_{\R^n}-\pi_V\right) (M^*h) \cdot (M^*h) =0$. Thus, $M^*h$ has zero component in the $V^\perp$-direction, \textit{i.e.}, $v=0$. This shows that
\begin{align*}
\left(\id_{\R^n}-\pi_V\right) M^* = {\bf 0}. 
\end{align*}
But $M^*=(\na u)^* \pi_H^*=(\na u)^* \pi_H$, so $(\na u)^*$ maps $H$ into $V$. This proves the \emph{claim}~(V).      \end{proof}

Now, by virtue of the \emph{claim}~(V), we have $(\na u)^*H \subset V$ for a randomly chosen $V \in \gr_d(\R^n)$, and $H:=V\cap u^\perp$. Here $\dim V =d < n$ and $O(n)$ acts transitively on   $\gr_d(\R^n)$, so 
\begin{equation}
(\na u)^* H \subset H\qquad \text{ for a.e. } H \subset u^\perp. 
\end{equation}
This implies that $(\na u)^* = \beta \, \pi_{u^\perp}$ and hence $$\na u = \beta \, \pi_{u^\perp}$$ for some function $\beta: \B^n \to \R$. As before, this is understood as for \textit{a.e.} $x \in \B^n$.

We proceed by an ODE argument. For \textit{a.e.} direction $\omega \in \sph^{n-1}$, consider $$u_\omega(r) := u(r\omega),$$ which by a slicing argument lies in $W^{1,p}\left(]\delta,1[;\sph^{n-1}\right)$ for \textit{a.e.} $\delta \in ]0,1[$. Note that $u_\omega(1) = \omega$ by the boundary condition for $u \in \anp$. It then satisfies that
\begin{align*}
u_\omega'(r) = \omega \cdot \na u(r\omega) = \beta(r\omega) \left[\omega - \left(u_\omega(r)\cdot\omega\right)u_\omega(r) \right].
\end{align*}
Thus, the variable $\widetilde{u}(r):=u_\omega(r)\cdot\omega$ satisfies
\begin{align*}
\widetilde{u}'(r) = \beta(r\omega) \left[1-\widetilde{u}(r)^2\right]
\end{align*}
with the ``initial'' datum $\widetilde{u}(1) = 1$. By the uniqueness of ODE with integrable coefficients, we find that $\widetilde{u} \equiv 1$ on $]\delta,1[$; that is,
\begin{align*}
u(r\omega) = u_\omega(r) = \omega \qquad \text{for a.e. } r\in ]\delta,1[ \text{ and } \omega \in \sph^{n-1}.
\end{align*}
Here $\delta>0$ is arbitrary, so we conclude by sending $\delta \searrow 0$ that
\begin{align*}
u(x) = \hedge\qquad\text{for a.e. } x \in \B^n.
\end{align*}
This proves the uniqueness of $\ep$-minimiser in $\anp$ whenever $1<p<n-1$.

\smallskip
\noindent
{\bf Case~2: $p=n-1$.} This essentially follows from the classical null-Lagrangian arguments. Indeed, from the computations in \S\ref{sec: lower bd} we deduce that 
\begin{align*}
\alpha_{n-1} = \int_{\B^n} u \cdot \jj\,\dd x &\leq \int_{\B^n}|\jj|\,\dd x \\
&= \int_{\B^n} \left|\jac_{n-1}(\na u)\right|\,\dd x \leq (n-1)^{-\frac{n-1}{2}}\int_{\B^n}|\na u|^{n-1}\,\dd x
\end{align*}
for $\jj := \star\left(u^\#\omega_{\sph^{n-1}}\right)$. Also, by the minimality of hedgehog established in \S\ref{subsec: min}, the right-most term equals $\alpha_{n-1}$ when $u$ is an $\ep$-minimiser in $\anp$, so equalities must hold everywhere. Noting that $\jj$ spans the kernel of $\na u$ at points where $\na u$ has rank $(n-1)$, we thus have
\begin{equation}
(\na u) \cdot u = \na_u u = 0\qquad\text{a.e. on }\B^n.
\end{equation}
 
It is well-known, \textit{cf. e.g.}, Hardt--Lin--Wang~\cite{HardtLinWang1997}, that $u$ is regular outside a finite set away from the boundary. For any given $\omega \in \sph^{n-1}$, shoot an inward characteristic curve $\gamma$ from $\omega$:
\begin{equation*}
\begin{cases}
\gamma'(t) = -u\left(\gamma(t)\right),\\
\gamma(0)=\omega.
\end{cases}
\end{equation*}
Then, $\gamma$ has no acceleration:
\begin{align*}
\gamma''(t) = -\na u(\gamma(t))\cdot u(\gamma(t)) \equiv 0.
\end{align*}
Together with the boundary condition $u(\gamma(0))=u(\omega)=\omega$, this implies that $\gamma(t) = (1-t)\omega$ and hence $u(t\omega)\equiv \omega$. The above argument is rigorous whenever the characteristic curve $\gamma$ avoids the finite singular set of $u$. We may thus conclude that $u(x)=\hedge$ for \textit{a.e.} $x \in \B^n$.

\smallskip
\noindent
{\bf Case~3: $n-1<p<n$.} In this case, note that $\fp=n-1$, $d=\fp+1=n$, and $\gamma = \fp+1-p = n-p$. Again, from the computations in \S\ref{sec: lower bd}, one deduces that 
\begin{align}\label{case 3, eq}
\ep[u] = (n-1)^{\frac{p}{2}}\int_{\B^n}\left|\jac_{n-1}(\na u)\right|^{\frac{p}{n-1}}\,\dd x = (n-1)^{\frac{p}{2}}\frac{\alpha_{n-1}}{n-p}.
\end{align}
As in the previous case, $u$ is regular in a neighbourhood of the boundary, and it has only finitely many isolated singularities. A degree-0 isolated singularity is removable, so we assume that all the singularities have nonzero integer degrees.

We check that $u$ has only one singularity; say $s \in \B^n$. Indeed, were there at least two singularities, by an inspection on the ``colliding-and-merging ball'' argument in the proof of Proposition~\ref{prop:off-centre}, the inequality~\eqref{credit, xx} would be strict since the power $\gamma = n-p$ is strictly less than $1$. This would lead to $\int_{\B^n}\left|\jac_{n-1}(\na u)\right|^{\frac{p}{n-1}}\,\dd x > \frac{\alpha_{n-1}}{n-p}$. Contradiction.

Let us introduce the ``defect'':
\begin{align*}
\D_s(r) := \int_{\B^n_r(s)} \left|\jac_{n-1}\left(\na \widetilde{u}\right)\right|^{\frac{p}{n-1}}\,\dd x - \frac{\alpha_{n-1}}{n-p} r^{n-p},
\end{align*}
where $\widetilde{u} = u$ on $\B^n$ and $\widetilde{u}(x)=\hedge$ on $\R^n \setminus \B^n$. Clearly $\D_s(r)$ is increasing in $r>0$, and by \S\ref{sec: lower bd} it holds that $\D_s(r) \geq 0$ for any $r >0$. Also, in view of \eqref{case 3, eq} and arguments in  \S\ref{sec: lower bd}, we have $\lim_{r \to \infty} \D_s(r)=0$. Therefore, $\D_s(r)\equiv 0$ for all $r>0$.

Now, for a sphere $\Sigma$, denote by $\na_\tau$ the tangential gradient, \emph{i.e.}, the projection of the gradient to $T\Sigma$. By H\"{o}lder's inequality, we have that
\begin{align}\label{chain of ineq}
\int_{\p\B^n_r(s)}\left|\na \widetilde{u}\right|^p\,\dd\hau^{n-1} &\geq (n-1)^{\frac{p}{2}} \int_{\p\B^n_r(s)} \jac_{n-1}\left(\na_\tau\widetilde{u}\right)^{\frac{p}{n-1}}\,\dd\hau^{n-1}\nonumber\\
&\geq (n-1)^{\frac{p}{2}} \left[\hau^{n-1}\left(\p\B^n_r(s)\right)\right]^{1-\frac{p}{n-1}} \left\{\int_{\p\B^n_r(s)} \jac_{n-1}\left(\na_\tau\widetilde{u}\right)\,\dd\hau^{n-1}\right\}^{\frac{p}{n-1}}\nonumber\\
&\geq (n-1)^{\frac{p}{2}} \left[ r^{n-1}\alpha_{n-1}\right]^{1-\frac{p}{n-1}}  \left|\int_{\p\B^n_r(s)} \det\left(\na_\tau\widetilde{u}\right)\,\dd\hau^{n-1}\right|^{\frac{p}{n-1}}\nonumber\\
&=  (n-1)^{\frac{p}{2}} \left[ r^{n-1}\alpha_{n-1}\right]^{1-\frac{p}{n-1}}\left( \alpha_{n-1} \left|\deg\left(\widetilde{u};s\right)\right|\right)^{\frac{p}{n-1}}\nonumber\\
&\geq (n-1)^{\frac{p}{2}} \alpha_{n-1} r^{n-p-1}.
\end{align}
But an integration over $r$ of the above inequalities leads to 
\begin{align*}
\int_{\B^n_R(s)}|\na u|^p\,\dd x \geq \frac{(n-1)^{\frac{p}{2}}\alpha_{n-1}}{n-p} R^{n-p},
\end{align*}
where the equality holds since $\D_s(R)=0$ for any $R>0$. Thus, equalities must hold everywhere in the chain of inequalities~\eqref{chain of ineq} for \textit{a.e.} $r>0$.

From the equal cases in~\eqref{chain of ineq}, we deduce that the singularity $s$ is of degree $1$, and that
\begin{equation}
\jac_{n-1}\left(\na \widetilde{u}\right) = \jac_{n-1}\left(\na_\tau\widetilde{u}\right)\qquad\text{at a.e. $x \in \p\B^n_r(s)$ and for a.e. $r>0$.} 
\end{equation}
 By linear algebra, this implies that 
\begin{equation}\label{foliation}
\na  \widetilde{u}(x) \cdot \frac{x-s}{r} =0\qquad\text{for a.e. $x \in \p\B^n_r(s)$}, 
\end{equation}  
so the foliation by spheres centred at $s$ is everywhere tangent to the level sets of $\widetilde{u}$. But outside $\B^n$, the map $\widetilde{u} = \hedge$, so we must have $s=0$. 
Then~\eqref{foliation} in turn yields that $\widetilde{u}$ is radial. Hence it coincides with the hedgehog on $\B^n$.

The proof for the uniqueness of $\ep$-minimiser is now complete.  \end{proof}

\appendix
\section*{Appendix: Proof of the approximation lemma}

\addcontentsline{toc}{section}{Appendices}
\renewcommand{\thesubsection}{\Alph{subsection}}

 The argument presented below is standard. It is an adaptation of, for instance,  Bethuel~\cite{Bethuel1991}.

\begin{proof}[Proof of Lemma~\ref{lem: approx}]

We divide our arguments into three steps below.
All the balls in this proof are $(k+1)$-dimensional, so we denote systematically $\B_R(x_0) \equiv \B^{k+1}_R(x_0)$.

\smallskip
\noindent
{\bf Step 1.} Fix $\eta>0$ and extend $u$ to $ \B_{r+\eta}$ by
\[
 U(x)=
 \begin{cases}
 u(x)\qquad\text{ for } |x|<r,\\
 \hedge\qquad\text{ for } r<|x|<r+\eta.
 \end{cases}
\]
Since the trace of $u$ equals $\hedge$ on $\partial \B_r$, we have $U\in W^{1,p}(\B_{r+\eta};\sph^k)$. 
Choose $\lambda_\ell\searrow 1$ with $\lambda_\ell r<r+\eta$ and define
\[
        u^{(\ell)}(x)=U(\lambda_\ell x),
        \qquad x\in \B_r.
\]
Strong continuity of dilations in $W^{1,p}$ then implies that $u^{(\ell)}\rightarrow u$ strongly in $W^{1,p}(\B_r)$. Moreover,
\[
        u^{(\ell)}(x)= \hedge
        \qquad\text{whenever }\frac r{\lambda_\ell}<|x|<r.
\]
By passing to the limit, it suffices to prove for a map $v$ such that $v=\hedge$ on $\{r-\delta<|x|<r\}$ for some $\delta>0$.  

\smallskip
\noindent
{\bf Step 2.} Choose $g\in C^\infty\left(\overline{\B_r};\R^{k+1}\right)$ such that $g=\hedge$ on $\{r-\delta<|x|<r\}$. For example, one may take $g(x)=\varpi(|x|)\frac{x}{|x|}$, where $\varpi$ vanishes in a neighbourhood of the origin and is
identically one for $|x|>r-\delta$.  The map $(v-g)$ then vanishes on a neighbourhood of $\partial \B_r$. Extending it by zero and mollifying, we obtain a sequence
 $\{f_j\}\subset C_c^\infty(\B_r;\mathbb R^{k+1})$ such that $v_j:=g+f_j$ lies in $ C^\infty(\overline{\B_r};\mathbb R^{k+1})$, and $v_j \to v$ strongly in $W^{1,p}$. In addition, by choosing the mollification scales sufficiently small, we have $v_j = \hedge$ on a fixed smaller boundary collar. 

\smallskip
\noindent
{\bf Step 3.} Now we choose $ \chi\in C_c^\infty(\B_{1/2})$ such that $\chi\equiv 1$ on $\B_{1/4}$.
Fix $\varepsilon>0$ so small that $ \varepsilon<\frac14$ and $\varepsilon\|\na \chi\|_{L^\infty}<\frac12$. Then, for $a\in \B_\varepsilon$, we set
\[
        F_a(z)=z-\chi(z)a.
\]
Observe that
\[
        \|\na F_a-\id\|\leq |a|\,\|\na\chi\|_\infty<\frac12,
\]
so is a smooth biLipschitz diffeomorphism on
$\mathbb R^d$. Moreover, $F_a(z)=0$ if and only if $z=a$, and $|F_a(z)| =|F_a(z)-F_a(a)|\geq\frac12|z-a|$.

To proceed, let us define
\[
        P_a(z)=\frac{F_a(z)}{|F_a(z)|}\qquad
        \text{for any } z\neq a.
\]
It lies in $C^\infty(\mathbb R^{k+1}\setminus\{a\};\sph^{k})$. Meanwhile,
\begin{equation}
        |\na P_a(z)|\leq\frac{C}{|z-a|}
        \label{eq:moving-projection-gradient}
\end{equation}
and, since $F_a(z)=z$ for $|z|\geq1/2$, 
\begin{equation}
        P_a(z)=\frac z{|z|}
        \qquad\text{whenever }|z|\geq\frac12.
        \label{eq:moving-projection-normalization}
\end{equation}
In addition, put \[
        E_j=\{x\in B_r:|v_j(x)|<1/2\}.
\]
Since $|v|=1$ \textit{a.e.} and $v_j\to v$ strongly in $W^{1,p}$, we have $\leb^{k+1}(E_j) \to 0$ and
\begin{align}
 \int_{E_j}|\na v_j|^p\,\dd x
 &\leq C\int_{E_j}|\na v_j-\na v|^p\,\dd x
      +C\int_{E_j}|\na v|^p\,\dd x
 \longrightarrow 0.
 \label{eq:bad-set-energy}
\end{align}

By \eqref{eq:moving-projection-gradient}, Fubini's theorem, and
$p<k+1$, we obtain that
\begin{align}
 &\int_{\B_\varepsilon}
       \int_{E_j}|\na (P_a\circ v_j)|^p\,\dd x\,\dd a \notag\\
 &\qquad\leq
 C\int_{E_j}|\na v_j(x)|^p
       \left(
       \int_{\B_\varepsilon}
       \frac{\dd a}{|v_j(x)-a|^p}
       \right)\dd x \notag\\
 &\qquad\leq
 C\varepsilon^{d-p}
 \int_{E_j}|\na v_j|^p\,\dd x.
 \label{eq:averaged-projection}
\end{align}
The last line follows from the uniform estimate
\[
        \sup_{z\in\mathbb R^{k+1}}
        \int_{\B_\varepsilon^{k+1}}|z-a|^{-p}\,\dd a
        \leq C(n,k)\varepsilon^{k+1-p}\qquad\text{ if } p<k+1.
\]
By Sard's theorem, almost every $a\in \B_\varepsilon$ is a regular
value of $v_j$. We may therefore choose a regular value $a_j$ such
that
\begin{equation}
        \int_{E_j}|\na(P_{a_j}\circ v_j)|^p\,\dd x
        \leq
        C\int_{E_j}|\na v_j|^p
        \longrightarrow0.
        \label{eq:selected-projection}
\end{equation}

Now, define
\[
        w_j=P_{a_j}\circ v_j
\]
away from $\Sigma_j:=v_j^{-1}(a_j)$, and define it arbitrarily at those points. Since $a_j$ is a regular value and $v_j=\hedge$ near the boundary, $\Sigma_j$ is finite set in the interior of $\B_r$. Furthermore, $        w_j\in C^\infty\left(\overline{\B_r}\setminus\Sigma\right)$. Near each point $s\in\Sigma_j$, the inverse function theorem and \eqref{eq:moving-projection-gradient} give us 
\[
        |\na w_j(x)|\leq \frac{C}{|x-s|},
\]
which is locally $L^p$ as $p<k+1$. Points have zero $p$-capacity
in dimension $k+1$, so $w_j$ can be extended across $\Sigma_j$ as a Sobolev map: without relabelling, we obtain $w_j\in W^{1,p}\left(\B^{k+1}_r;\sph^{k}\right).$ In addition, on the boundary collar, $v_j=\hedge$. Hence
\eqref{eq:moving-projection-normalization} implies that 
\[
        w_j=P_{a_j}\left(\hedge\right)=\hedge.
\]

It remains to verify strong convergence. On $\B_r\setminus E_j$, one infers from~\eqref{eq:moving-projection-normalization} that 
\[
        w_j=\frac{v_j}{|v_j|}.
\]
As normalisation is smooth on $\{|z|\geq1/2\}$ and $v_j\to v$ strongly in $W^{1,p}$ with $|v|=1$, it follows that $w_j \to v$ strongly in $ W^{1,p}(\B_r\setminus E_j)$. On $E_j$, boundedness of the maps and the convergence results~\eqref{eq:bad-set-energy} and
\eqref{eq:selected-projection} yield that
\[
 \int_{E_j}|w_j-v|^p\,\dd x+\int_{E_j}|\na w_j-\na v|^p \longrightarrow 0.
\]
Thus $w_j \to v$  strongly in $ W^{1,p}(\B_r)$.

We conclude the proof of the approximation lemma by combining this construction with Step~1 and taking a diagonal
 subsequence.     \end{proof}

\medskip
\noindent
{\bf Acknowledgement}. The research of SL is supported by NSFC Projects 12201399, 12331008, and 12411530065, Young Elite Scientists Sponsorship Program by CAST 2023QNRC001, National Key Research $\&$ Development Programs 2023YFA1010900 and 2024YFA1014900, Shanghai Rising-Star Program 24QA2703600, Qi-Guang Scholarship, and Shanghai Frontier Research Institute for Modern Analysis.

	\noindent
	{\bf Statement of competing interests}. 
	The author declares that there is no conflict of interest.

	\noindent
	{\bf Statement of data availability}.
	Our manuscript has no associated data.

	\noindent
	{\bf AI Statement}. During preparation of this manuscript, the author used ChatGPT 5.6 sol model for exploratory discussion and language editing. The author independently verified the mathematical content and assumes full responsibility for it. 

\end{document}